\date{\empty}
\thanks{}
\renewcommand{\uppercasenonmath}[1]{}
\numberwithin{equation}{section} \theoremstyle{plain}
\newtheorem*{thm*}{Main Theorem}
\newtheorem{theorem}{Theorem}[section]
\newtheorem{corollary}[theorem]{Corollary}
\newtheorem*{corollary*}{Corollary}
\newtheorem*{claim*}{Claim}
\newtheorem{lemma}[theorem]{Lemma}
\newtheorem*{lemma*}{Lemma}
\newtheorem{proposition}[theorem]{Proposition}
\newtheorem*{proposition*}{Proposition}
\newtheorem{remark}[theorem]{Remark}
\newtheorem*{remark*}{Remark}
\newtheorem{example}[theorem]{Example}
\newtheorem*{example*}{Example}
\newtheorem*{question*}{Question}
\newtheorem{definition}[theorem]{Definition}
\newtheorem*{definition*}{Definition}
\newtheorem*{acknowledgements*}{ACKNOWLEDGEMENTS}
\begin{document}
\begin{center}
{\large  \bf Characterizations of the $(b, c)$-inverse in a ring}\\
\vspace{0.8cm} {\small \bf Long Wang$^{1,2}$, \ Jianlong Chen$^{1}$
\footnote{Corresponding author: Jianlong Chen.
Email: wangltzu@163.com (Long Wang), jlchen@seu.edu.cn (Jianlong Chen), nieves@fi.upm.es (Nieves Castro-Gonz\'{a}lez).\\
1 The research is supported by the NSFC (11371089), NSF of Jiangsu Province (BK20141327, BK20130599),
Specialized Research Fund for the Doctoral Program of Higher Education (20120092110020).\\
3 The research is supported by Project MTM2010-18057, ``Ministerio de Ciencia e Innovaci\'{o}n" of Spain.}
and Nieves Castro-Gonz\'{a}lez$^{3}$}\\
\vspace{0.6cm} {\rm $^{1}$Department of Mathematics, Southeast University, Nanjing, 210096, China\\
$^{2}$Department of Mathematics, Taizhou University, Taizhou 225300, China\\
$^{3}$Departamento Matem\'{a}tica Aplicada, ETSI Inform\'{a}tica, Universidad Polit\'{e}cnica Madrid, 28660
Madrid, Spain}
\end{center}

\bigskip

{ \bf  Abstract:}  \leftskip0truemm\rightskip0truemm Let $R$ be a ring and $b, c\in R$.
In this paper, we give some characterizations of the $(b,c)$-inverse, in terms of the direct sum decomposition, the annihilator and the invertible elements.
Moreover, elements with equal $(b,c)$-idempotents related to their $(b, c)$-inverses are characterized, and the reverse order rule for the $(b,c)$-inverse is considered.
\\{ \textbf{2010 Mathematics Subject Classification:} 15A09, 16U99. }
\\{  \textbf{Keywords:}}  $(b, c)$-inverse, $(b, c)$-idempotent, Regularity, Image-kernel $(p, q)$-inverse.
 \bigskip


\section{Introduction}
Moore-Penrose inverse, Drazin inverse and group inverse, as for the classical generalized inverses, are special types of outer inverses.
In \cite{MP.D1}, Drazin introduced a new class of outer inverse in a semigroup and called it $(b, c)$-inverse.
\begin{definition}  \label{def:bc-inverse}
Let $R$ be an associative ring and let $b, c\in R$. An element $a \in R$ is $(b, c)$-invertible if there exists $y\in R$ such that
\begin{center}
$y \in (bRy) \cap (yRc)$, \quad $yab = b$, \quad $cay = c$.
\end{center}
If such $y$ exists, it is unique and is denoted by $a^{\|(b,c)}$.
\end{definition}
From \cite{MP.D1}, we know that the Moore-Penrose inverse of $a$, with respect to an involution $*$ of $R$, is the $(a^{\ast}, a^{\ast})$-inverse of $a$,
the Drazin inverse of $a$ is the $(a^{j}, a^{j})$-inverse of $a$ for some $j\in \mathbb{N}$, in particular, the group inverse of $a$ is the $(a,a)$-inverse of $a$.

Given two idempotents $e$ and $f$, Drazin introduced the Bott-Duffin $(e, f)$-inverse in \cite{MP.D1}, which can be considered as a particular cases of the $(b, c)$-inverse.
In 2014, Kant\'{u}n-Montiel introduced the image-kernel $(p, q)$-inverse for two idempotents $p$ and  $q$, and pointed out that
an element $a$ is  image-kernel $(p, q)$-invertible if and only if it is Bott-Duffin $(p, 1-q)$-invertible \cite[Proposition 3.4]{GKM}.
In \cite{D.M1}, elements with equal idempotents related to their
image-kernel $(p, q)$-inverses are characterized in terms of
classical invertibility. The topics of research on the image-kernel $(p, q)$-inverse and the Bott-Duffin $(e, f)$-inverse attract wide interest (see \cite{C.X1,C.X2,N2,C.L.Z,D.W,MP.D1,GKM,D.M1}).

This article is motivated by the papers \cite{MP.D1,D.M1}. In \cite{MP.D1}, as a generalization of $(b,c)$-inverse, hybrid $(b,c)$-inverse and annihilator $(b,c)$-inverse were introduced. In section 3,  it is shown that if the $(b,c)$-inverse of $a$ exists, then both $b$ and $c$ are regular.
Further, under the natural hypothesis of both $b$ and $c$ regular, some characterizations of the $(b,c)$-inverse are obtained in terms of the direct sum decomposition, the annihilator and the invertible elements. In particular, we will prove that $(b,c)$-inverse, hybrid $(b,c)$-inverse and annihilator $(b,c)$-inverse are coincident. Some results of the image-kernel $(p, q)$-inverse in \cite{D.M1} are generalized.

If $a$ has a $(b, c)$-inverse, then both $a^{\|(b,c)}a$ and $aa^{\|(b,c)}$ are idempotents. These will be referred as to the $(b,c)$-idempotents associated with $a$.
In \cite{N1}, Castro-Gonz\'{a}lez, Koliha and Wei characterized matrices with the same spectral idempotents corresponding to the Drazin inverses of
these matrices. Koliha and Patr\'{\i}cio \cite{JJK1} extend the results to the ring case.
A similar question for the Moore-Penrose inverse was considered in \cite{P.P1}.  In \cite{D.M1}, Mosi\'{c} gave some characterizations of elements which have the same idempotents related to their image-kernel $(p, q)$-inverses.
It is  of interest to know whether two elements in the ring  have  equal $(b,c)$-idempotents.
In section 4, some characterizations of those elements with equal $(b,c)$-idempotents are given.
Moreover, the reverse order rule for the $(b,c)$-inverse is considered.

\section{Preliminaries}
Let $R$ be an associative ring with unit 1. Let $a\in R$. Recall $a$ is a regular element if there
exists $x \in R$ such that $a = axa$. In this case, the element $x$ is called an inner inverse for
$a$ and we will denote it by $a^{-}$. If the equation $x = xax$ is satisfied, then we say that $a$ is
outer generalized invertible and $x$ is called an outer inverse for $a$. An element $x$ that is both inner and outer inverse of $a$ and commutes with $a$, when it exist, must be unique and is called the group inverse of $a$, denoted by $a^{\#}$.
From now on,  $E(R)$ and $R^{\#}$ stand for the set of all idempotents and the set of all group invertible elements  in $R$.
For the sake of convenience, we introduce some necessary notations.

For an element $a \in R$ and $X\subseteq R$, we define
\begin{center}
$aR := \{ax : x \in R\}$, \quad $Ra := \{xa : x \in R\}$;\\
$l(X) := \{y \in R : \ yx=0 \ \text{for any} \ x\in X\}$, \quad $r(X) := \{y \in R : \ xy=0 \ \text{for any} \ x\in X\}$.
\end{center}
In particular,
\begin{center}
$l(a) := \{y \in R : \ ya=0 \}$, \quad $r(a) := \{y \in R : \ ay=0 \}$,\\
$rl(a)=\{y : xy=0, x\in l(a)\}$ and $lr(a)=\{y : yx=0, x\in r(a)\}$.
\end{center}

Let $p, q \in E(R)$. An element $a\in R$ has an image-kernel $(p, q)$-inverse \cite{GKM,D.M1} if there exists an element $c \in R$ satisfying
\begin{center}
$cac = c,  \quad caR = pR, \quad (1-ac)R = qR.$
\end{center}
The image-kernel $(p, q)$-inverse is unique if it exists, and it will be denoted by $a^{\times}$.
A generalization of the original Bott-Duffin inverse \cite{B.D} was given in \cite{MP.D1}:
let $e, f \in E(R)$, an element $a \in R$ is Bott-Duffin $(e, f)$-invertible if
there exist $y \in R$ such that
$y = ey = yf$, $yae = e$ and $fay = f$. When  $e=f$, the element $y$, if any, is given by $y=e(ae+1-e)^{-1}$ as for the original Bott-Duffin inverse.

The above mentioned generalized inverses are particular cases of the $(b, c)$-inverse where $b$ and $c$ have the property of being both idempotents.
Hence, the research of $(b,c)$-inverse has important significance to the development of the generalized inverse theory.

For the future reference we state two known results.
\begin{lemma}\label{lem1:bc-inverse}
\cite[Theorem 2.2]{MP.D1}
For any given $a, b, c \in R$, there exists the $(b, c)$-inverse $y$ of $a$ if and only if
$Rb=Rt$ and $cR=tR$, where $t=cab$.
\end{lemma}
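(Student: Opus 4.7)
The plan is a direct two-directional argument, with the genuine work living in the sufficiency direction.

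For necessity, suppose $y = a^{\|(b,c)}$ exists. Then $y \in bRy \cap yRc$ forces in particular $y \in bR \cap Rc$, so write $y = bu = vc$; and by definition $yab = b$ and $cay = c$. Substituting $y = vc$ into $yab = b$ gives $b = v(cab) = vt$, hence $Rb \subseteq Rt$; the reverse inclusion is immediate from $t = cab \in Rb$. A mirror-image calculation using $y = bu$ in $cay = c$ yields $cR = tR$. This half amounts to two one-line substitutions.

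For sufficiency, suppose $Rb = Rt$ and $cR = tR$, and pick $p, q \in R$ with $b = pt$ and $c = tq$. The candidate I would try for the $(b,c)$-inverse is
\[
y := pc,
\]
which by the two defining identities collapses simultaneously to $pc = p(tq) = (pt)q = bq$; thus the single element $y$ witnesses both $y \in bR$ and $y \in Rc$. The two equations $yab = pcab = pt = b$ and $cay = cabq = tq = c$ then drop out by the same substitutions.

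The step I expect to be the main obstacle is upgrading $y \in bR \cap Rc$ to the required $y \in bRy \cap yRc$, since my construction only visibly places $y$ on either side, not sandwiched around itself. The move I would use is to first extract the outer-inverse identity
\[
y \;=\; pc \;=\; p(cay) \;=\; (pc)(ay) \;=\; y(ay),
\]
so that $y = yay$. With this in hand, the two-sided membership is immediate: using $y = pc$ on the right I get $y = ya \cdot pc = y(ap)c \in yRc$, and using $y = bq$ on the left I get $y = bq \cdot ay = b(qa)y \in bRy$. Uniqueness of the $(b,c)$-inverse is already built into Definition~\ref{def:bc-inverse}, so no further argument is required.
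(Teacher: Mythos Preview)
Your proof is correct. Note, however, that the paper does not supply its own proof of this lemma: it is quoted verbatim as \cite[Theorem~2.2]{MP.D1} and used as a black box. So there is no in-paper argument to compare against.

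That said, your argument is essentially the one Drazin gives in the cited source. The only cosmetic difference is packaging: Drazin works with the condition $b\in Rcab$ and $c\in cabR$ directly, whereas you phrase it as $Rb=Rt$, $cR=tR$ and then pick representatives $b=pt$, $c=tq$; the candidate $y=pc=bq$ and the verification via $yay=y$ to bootstrap $y\in bR\cap Rc$ up to $y\in bRy\cap yRc$ is exactly the standard move. Nothing is missing.
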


\begin{lemma}\label{lem2:bc-inverse}
\cite[Proposition 6.1]{MP.D1}
For any given $a, b, c \in R$, $y$ is the $(b, c)$-inverse of $a$ if and only if
$yay = y$,  $yR = bR$ and $Ry = Rc$.
\end{lemma}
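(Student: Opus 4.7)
The plan is to establish both implications directly from the definition of the $(b,c)$-inverse and from elementary manipulations of the one-sided ideal equalities $yR=bR$ and $Ry=Rc$.

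For the forward direction, assume $y$ is the $(b,c)$-inverse of $a$, so $y\in bRy\cap yRc$, $yab=b$, and $cay=c$. I would first write $y=ytc$ for some $t\in R$ (from $y\in yRc$) and compute
\[
yay=(ytc)ay=yt(cay)=ytc=y,
\]
which gives the outer-inverse identity $yay=y$. Next, $y\in bRy\subseteq bR$ gives $yR\subseteq bR$, while $yab=b$ gives $b\in yR$ and hence $bR\subseteq yR$; so $yR=bR$. Symmetrically, $y\in yRc\subseteq Rc$ and $cay=c$ yield $Ry=Rc$.

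For the converse, assume $yay=y$, $yR=bR$, and $Ry=Rc$. Write $y=bu$, $b=yv$, $y=wc$, $c=xy$ for appropriate $u,v,w,x\in R$; these are available immediately from the two ideal equalities. Then
\[
y=yay=(bu)ay=b(ua)y\in bRy,\qquad y=yay=ya(wc)=y(aw)c\in yRc,
\]
so the membership condition $y\in bRy\cap yRc$ holds. Finally,
\[
yab=ya(yv)=(yay)v=yv=b,\qquad cay=(xy)ay=x(yay)=xy=c,
\]
which are exactly the remaining two defining equations. Uniqueness is already guaranteed by Definition~\ref{def:bc-inverse}, so $y=a^{\|(b,c)}$.

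There is no real obstacle; the only point requiring a moment's care is, in the converse, recognizing that one must combine $y=yay$ with the factorizations $y=bu$ and $y=wc$ (rather than with $b=yv$ and $c=xy$) in order to land inside $bRy$ and $yRc$ rather than merely inside $bR$ and $Rc$. Once this grouping is chosen, every step is a single substitution using associativity and the hypothesis $yay=y$.
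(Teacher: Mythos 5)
Your proof is correct, and it is worth noting that the paper itself offers no proof of this lemma at all: it is quoted verbatim from Drazin \cite[Proposition 6.1]{MP.D1}, so yours is a self-contained verification where the paper has only a citation. Both directions check out: in the forward direction the substitution $y=ytc$ (from $y\in yRc$) combined with $cay=c$ gives $yay=y$, and the two ideal equalities follow from $y\in bRy\subseteq bR$ together with $yab=b$ (and symmetrically); in the converse, your choice to substitute $y=bu$ and $y=wc$ into $yay=y$ is exactly what is needed to land in $bRy$ and $yRc$ rather than merely $bR$ and $Rc$, and the remaining equations $yab=b$, $cay=c$ follow from $b=yv$, $c=xy$ and $yay=y$. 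The only hypothesis you use beyond associativity is that $y\in yR$, $b\in bR$, $y\in Ry$, $c\in Rc$, which holds because the paper assumes in Section 2 that $R$ has a unit; this is consistent with the setting here, though Drazin's original statement is formulated so as to work without a unit. No gaps.
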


\section{Some characterizations of the existence of $(b, c)$-inverse}

Firstly, we will give some lemmas which will be used in the sequel.

\begin{lemma} \label{le:y outer of a}
Let $a, y \in R$ such that  $y$ is an outer inverse of $a$. Then
\begin{enumerate}[(i)]
\item $r(a) \cap yR=\{0\}$.
\item $l(a) \cap Ry=\{0\}$.
\item $Ray=Ry$.
\item $yaR=yR$.
\end{enumerate}
\end{lemma}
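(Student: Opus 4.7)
The plan is to exploit the single defining identity $y = yay$ of an outer inverse in four short, essentially symmetric computations. No obstacle is expected; the main care is to keep the left/right patterns straight.

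For (i), I would take an arbitrary element $x \in r(a) \cap yR$ and show $x = 0$. Writing $x = yr$ for some $r \in R$ and using $ax = 0$, I compute
\[
x = yr = (yay)r = ya(yr) = yax = y(ax) = 0.
\]
Part (ii) is the mirror image: for $x \in l(a) \cap Ry$, write $x = sy$ and use $xa = 0$ to get $x = sy = s(yay) = (sy)ay = xay = (xa)y = 0$.

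For (iii), the inclusion $Ray \subseteq Ry$ is immediate. For the reverse inclusion, any $ry \in Ry$ satisfies $ry = r(yay) = (ry)ay \in Ray$, so $Ry \subseteq Ray$. Part (iv) is dual: $yaR \subseteq yR$ is trivial, and for $yr \in yR$ one writes $yr = (yay)r = ya(yr) \in yaR$.

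The only thing to be careful about is distinguishing which side $y$ sits on and placing associative parentheses correctly so that the substitution $yay = y$ is applied in the right spot; beyond that, each item is a one-line calculation, so I would present the lemma as four brief bullets with these substitutions spelled out.
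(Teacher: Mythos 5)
Your proposal is correct and follows essentially the same route as the paper: each part is the same one-line substitution of $yay=y$ into the relevant expression (the paper writes $x=yg$, $ayg=0$, hence $yayg=yg=0$ for (i), and mirrors it for (ii), with (iii) and (iv) read off directly from $yay=y$). Nothing further is needed.
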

\begin{proof} $(i)$. Let $x\in r(a) \cap yR$. Then $ax=0$ and there exists $g\in R$ such that $x=yg$. This gives that $ayg=0$ and, thus, $yayg=yg=0$.
Therefore, $x=0$.\par
$(ii)$. Let $x\in l(a) \cap Ry$. Then $xa=0$ and there exists $h\in R$ such that $x=hy$. It leads to $hya=0$. Then $hyay=hy=0$ and, thus, $x=0$.\par
$(iii)$ and $(iv)$. From $yay=y$ it follows that $yaR=yR$ and $Ry=Ray$.
\end{proof}

\begin{lemma} \label{le:regular}
Let $a \in R$ be regular and $b\in R$. Then
\begin{enumerate}[(i)]
\item  $b$ is regular in case $Ra=Rb$.
\item  $rl(a)=aR$ and $lr(a)=Ra$.
\end{enumerate}
\end{lemma}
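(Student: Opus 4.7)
The plan is to handle the two parts separately, exploiting regularity of $a$ in each case.

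For part (i), I would start from the equality $Ra=Rb$, which gives me elements $u,v \in R$ with $a=ub$ and $b=va$. Fix an inner inverse $a^{-}$ of $a$ so that $aa^{-}a=a$. The key computation is then
\begin{equation*}
b a^{-} u \cdot b = b a^{-}(ub) = b a^{-} a = v a a^{-} a = v a = b,
\end{equation*}
which shows that $a^{-}u$ is an inner inverse of $b$. So regularity transfers from $a$ to $b$ along the equality of principal left ideals.

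For part (ii), I would prove each equality by showing both inclusions. The inclusions $aR\subseteq rl(a)$ and $Ra\subseteq lr(a)$ are essentially tautological: if $y\in l(a)$ and $ar\in aR$, then $y(ar)=(ya)r=0$, and similarly on the other side. The substantive content is the reverse inclusions, and here the regularity hypothesis is used to manufacture the right annihilators. Specifically, since $aa^{-}a = a$, the element $1-aa^{-}$ lies in $l(a)$, because $(1-aa^{-})a=a-aa^{-}a=0$. Therefore any $y\in rl(a)$ must satisfy $(1-aa^{-})y=0$, which rearranges to $y=aa^{-}y\in aR$. Symmetrically, $1-a^{-}a\in r(a)$, so any $y\in lr(a)$ satisfies $y(1-a^{-}a)=0$, giving $y=ya^{-}a\in Ra$.

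There is no real obstacle here; the whole argument is an exercise in chasing idempotents manufactured from an inner inverse. The only point worth flagging is that in part (ii) it is important that $a$ is assumed regular in the statement of the lemma, since without it the idempotents $1-aa^{-}$ and $1-a^{-}a$ are not available and the inclusions $rl(a)\subseteq aR$, $lr(a)\subseteq Ra$ can fail. I would therefore write the proof in the order (i) followed by (ii), presenting (ii) by first noting the easy inclusions and then using the idempotent trick for the harder direction.
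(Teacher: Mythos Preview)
Your proposal is correct and follows essentially the same route as the paper: part (i) is identical up to renaming ($u,v$ versus the paper's $g,h$, both yielding the inner inverse $a^{-}u$ of $b$), and part (ii) uses the same idempotent trick, with your version slightly more direct since you only need $1-aa^{-}\in l(a)$ rather than the paper's full identification $l(a)=R(1-aa^{-})$.
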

\begin{proof}
$(i)$. Since $Ra=Rb$, there exist some $g,h\in R$ such that $a=gb$ and $b=ha$. Hence, using that $a$ is regular, one can see $b=(ha)a^{-}a=ba^{-}a=ba^{-}gb$,
which  means that $b$ is regular.\par

$(ii)$. It is easy to check that $aR\subseteq rl(a)$. Note that $l(a)=l(aa^{-})=R(1-aa^{-})$. For any $x\in rl(a)$, one can get $R(1-aa^{-})x=l(a)x=0$.
This gives $x=aa^{-}x\in aR$ and $rl(a)=aR$.
Similar considerations apply to prove  that $lr(a)=Ra$.
\end{proof}

\begin{proposition} \label{pro:regular}
If $a$ has a  $(b,c)$-inverse, then  $b$, $c$  and $t=cab$  are all of them regular.
\end{proposition}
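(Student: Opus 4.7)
The plan is to leverage the characterization in Lemma \ref{lem2:bc-inverse} together with the elementary observations that $y \in bR$ and $y \in Rc$, so that the identities $yab=b$ and $cay=c$ produce explicit inner inverses.

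First, let $y = a^{\|(b,c)}$. By Lemma \ref{lem2:bc-inverse}, $yay=y$, $yR=bR$ and $Ry=Rc$. In particular, $y\in bR$ and $y\in Rc$, so I can write $y=bv$ and $y=wc$ for some $v,w\in R$. Substituting $y=bv$ into the identity $yab=b$ yields $b = bvab = b(va)b$, which exhibits $va$ as an inner inverse of $b$; hence $b$ is regular. Symmetrically, substituting $y=wc$ into $cay=c$ gives $c = cawc = c(aw)c$, so $aw$ is an inner inverse of $c$ and $c$ is regular.

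For $t = cab$, I would apply Lemma \ref{lem1:bc-inverse}, which guarantees $Rb=Rt$ under the hypothesis that the $(b,c)$-inverse of $a$ exists. Since $b$ has just been shown to be regular and $Rb=Rt$, Lemma \ref{le:regular}(i) immediately gives that $t$ is regular.

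There is essentially no obstacle here: once one notices that membership $y\in bR$ and $y\in Rc$ (consequences of the $yR=bR$, $Ry=Rc$ characterization) combines with the defining equations $yab=b$ and $cay=c$ to produce inner inverses of $b$ and $c$ directly, the rest is a one-line appeal to the preceding lemmas. The only subtlety worth flagging is that this argument uses both parts of Lemma \ref{lem2:bc-inverse} (not just $yay=y$), and that the regularity of $t$ is obtained from regularity of $b$, not proved independently.
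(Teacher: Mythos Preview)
Your proof is correct and follows essentially the same approach as the paper. The only cosmetic difference is that the paper obtains $y\in bR$ directly from Definition~\ref{def:bc-inverse} (via $y\in bRy$) rather than via Lemma~\ref{lem2:bc-inverse}, but the substitution into $yab=b$ and the treatment of $t$ through Lemmas~\ref{lem1:bc-inverse} and~\ref{le:regular}(i) are identical.
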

\begin{proof}
Let $y$ be the $(b,c)$-inverse of $a$. In view of  Definition \ref{def:bc-inverse}, one can see $b=yab\in (bRy)ab\subseteq bRb$. This gives that $b$ is regular. In the same manner one can obtain that $c$ is regular.
Now, on account of  Lemma \ref{lem1:bc-inverse}, we have $Rb=Rt$ and $cR=tR$ since the $(b,c)$-inverse of $a$ exists. From Lemma \ref{le:regular}, we conclude that $t$ is regular.
\end{proof}

In what follows, we will give necessary and sufficient conditions for the existence of the $(b,c)$-inverse when $t=cab$ is regular.

\begin{theorem} \label{th:exists_bc-inverse}
Let $a, b, c \in R$. If $t=cab$ is regular, then the following statements are equivalent:
\begin{enumerate}[(i)]
\item $a$ has a $(b,c)$-inverse.
\item $r(a)\cap bR = \{0\}$ and $R = abR \oplus r(c)$.
\item $r(t)=r(b)$ and $tR=cR$.
\item $l(t)=l(c)$ and $Rt=Rb$.
\item  $l(t)=l(c)$ and $r(t)= r(b)$.
\end{enumerate}
\end{theorem}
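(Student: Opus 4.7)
My plan is to establish the cycle $(i)\Rightarrow(ii)\Rightarrow(iii)\Rightarrow(i)$, then derive $(v)$ from $(iii)$ and $(iv)$, and finally prove $(v)\Rightarrow(iii)$ and $(v)\Rightarrow(iv)$ by essentially the same short argument used twice (left/right symmetric). Throughout, Lemma \ref{lem1:bc-inverse}, Lemma \ref{lem2:bc-inverse} and Lemma \ref{le:regular}(ii) will carry the weight.

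\emph{Step 1: $(i)\Rightarrow(ii)$.} Let $y=a^{\|(b,c)}$. Using $yab=b$, any $x\in r(a)\cap bR$ written as $x=bg$ satisfies $x=yabg=y(ax)=0$. For the decomposition, Lemma \ref{lem2:bc-inverse} gives $y\in yR=bR$, so $y=bw$ and $ay=abw\in abR$; from $cay=c$ we also get $1-ay\in r(c)$, so each $r\in R$ splits as $r=ayr+(1-ay)r\in abR+r(c)$. For directness, Lemma \ref{lem1:bc-inverse} yields $Rb=Rt$, hence $r(b)=r(t)$; if $abu\in r(c)$ then $tu=cabu=0$, so $u\in r(t)=r(b)$, giving $abu=0$.

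\emph{Step 2: $(ii)\Rightarrow(iii)$.} Writing $1=abu+v$ with $v\in r(c)$ gives $c=cabu=tu\in tR$, and the reverse containment is trivial; so $tR=cR$. The inclusion $r(b)\subseteq r(t)$ is clear. Conversely, if $tx=0$ then $abx\in abR$ and $c(abx)=0$, so $abx\in abR\cap r(c)=\{0\}$; then $bx\in r(a)\cap bR=\{0\}$ forces $bx=0$.

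\emph{Step 3: $(iii)\Rightarrow(i)$.} By Lemma \ref{lem1:bc-inverse} it suffices to prove $Rb=Rt$. The inclusion $Rt=Rcab\subseteq Rb$ is immediate. For the reverse, this is the \textbf{main delicate point}: we have no a priori regularity on $b$, so we cannot directly compare $Rb$ with its double annihilator. We exploit the hypothesis that $t$ is regular and invoke Lemma \ref{le:regular}(ii) to write $Rt=lr(t)$; combined with $r(t)=r(b)$ this gives $Rt=lr(t)=lr(b)\supseteq Rb$, as needed.

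\emph{Step 4: interlocking $(iii)$, $(iv)$, $(v)$.} Noting that $tR=cR$ forces $l(t)=l(c)$, and dually $Rt=Rb$ forces $r(t)=r(b)$, we obtain $(iii)\Rightarrow(v)$ and $(iv)\Rightarrow(v)$. For $(v)\Rightarrow(iii)$: again using that $t$ is regular, Lemma \ref{le:regular}(ii) gives $tR=rl(t)=rl(c)\supseteq cR$, while $tR\subseteq cR$ is trivial; so $tR=cR$. The argument for $(v)\Rightarrow(iv)$ is the left-right mirror image. This closes all equivalences. The only genuine obstacle is keeping track of \emph{which} regularity (of $t$, not $b$ or $c$) is available and using Lemma \ref{le:regular}(ii) precisely where the annihilator-equality alone is not enough to recover a principal ideal equality.
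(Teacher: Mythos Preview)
Your proof is correct. The argument is close in spirit to the paper's, but the organization and the key technical device differ slightly, so a brief comparison is worthwhile.

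The paper proves a single cycle $(i)\Rightarrow(ii)\Rightarrow(iii)\Rightarrow(iv)\Rightarrow(v)\Rightarrow(i)$, whereas you close a short cycle $(i)\Rightarrow(ii)\Rightarrow(iii)\Rightarrow(i)$ first and then attach $(iv)$ and $(v)$ via $(iii)\Leftrightarrow(v)\Leftrightarrow(iv)$. For the crucial step of recovering $Rb\subseteq Rt$ from $r(t)=r(b)$ and regularity of $t$, the paper argues concretely: since $1-t^{-}t\in r(t)=r(b)$, one has $b=bt^{-}t\in Rt$. You instead invoke Lemma~\ref{le:regular}(ii) to write $Rt=lr(t)=lr(b)\supseteq Rb$. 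These are two faces of the same coin (indeed the proof of Lemma~\ref{le:regular}(ii) is essentially the inner-inverse computation), so neither route is more powerful; yours is a touch more abstract, the paper's a touch more explicit. Similarly, in $(i)\Rightarrow(ii)$ the paper obtains the decomposition $R=abR\oplus r(c)$ from the idempotent $ay$ via Lemma~\ref{le:y outer of a}, while you write the splitting by hand and verify directness using $Rb=Rt$ from Lemma~\ref{lem1:bc-inverse}; both are fine.
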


\begin{proof}
$(i)\Rightarrow (ii)$ Suppose that  $y$ is the $(b,c)$-inverse of $a$. By Lemma \ref{lem2:bc-inverse}, $yay=y$, $yR = bR$ and $Ry=Rc$.
By Lemma \ref{le:y outer of a} $(i)$, one can see $r(a)\cap yR=\{0\}$, it follows that $r(a)\cap bR=\{0\}$.
Since $ay\in E(R)$, we  have  the decomposition  $R=ayR\oplus r(ay)$.
From  $yR = bR$ we obtain $ayR=abR$. By Lemma \ref{le:y outer of a} $(iii)$ and  $Ry=Rc$, then $Ray=Rc$ and hence  $r(ay)=r(c)$.
Consequently, we have $R=abR\oplus r(c)$.\par

$(ii)\Rightarrow (iii)$. It is clear that $r(b)\subseteq r(t)$. For any $x\in r(t)$, we have $tx=cabx=0$. This means that $abx\in r(c)$. Using that $r(c)\cap abR=\{0\}$ we conclude that $abx=0$. Then $bx \in r(a)\cap bR=\{0\}$. This implies that $bx=0$ and, thus, $x\in r(b)$. Therefore $r(t)= r(b)$.\par
It is clear that $tR\subseteq cR$. Since $R = abR \oplus r(c)$, we can write $1=abg+h$ where  $g\in R$ and $h\in r(c)$. Premultiplaying  by
$c$ gives $c=cabg\in tR$, ensuring that $cR=tR$.\par

$(iii)\Rightarrow (iv)$. Since $tR=cR$,  we have $l(c)=l(t)$. It is clear the  $Rt\subseteq Rb$. Using that $t$ is regular and $r(t)=r(b)$ we obtain that $b(1-t^{-}t)=0$.
 Then $b=bt^{-}t$.  Consequently, $Rt=Rb$.\par

$(iv)\Rightarrow (v)$. It is clear. \par

 $(v)\Rightarrow (i)$.  Since  $r(t)=r(b)$ and $t$ is regular we can prove that $Rt=Rb$ as in the proof of $(iii)\Rightarrow (iv)$. Similarly, from
 $l(t)=l(c)$ and the fact that $t$ is regular we get $tR=cR$. On account of Lemma \ref{lem1:bc-inverse} we conclude that $a$ has a $(b,c)$-inverse.
\end{proof}

In  Theorem \ref{th:exists_bc-inverse}, the implications $(i)\Rightarrow (ii)$ and $(ii)\Rightarrow (iii)$ are valid even if  $t$ is not regular.
 However, we will give a counterexample to show that $(iii)$ does not imply $(iv)$ in general when $t$ is not regular.
\begin{example}
Set $R=\mathbb{Z}$, $a=b=1$ and $c=2$. Clearly, $tR=cR$ and $r(t)=r(b)$, but $Rb\neq Rt$.
\end{example}

When we  replace the hypothesis that $t$ is regular in Theorems \ref{th:exists_bc-inverse} by the condition that both $b$ and $c$ are regular, we obtain the following result.
\begin{theorem} \label{th:exists2_bcinverse}
Let  $a, b, c \in R$. If both $b$ and $c$ are  regular, then the statements (i)-(iv) in Theorem \ref{th:exists_bc-inverse} are equivalent.
\end{theorem}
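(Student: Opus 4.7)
The strategy is to reduce the theorem to Theorem \ref{th:exists_bc-inverse} by showing that, under the hypothesis that $b$ and $c$ are regular, each of the conditions (i)--(iv) already forces $t=cab$ to be regular; once that is established, the arguments of Theorem \ref{th:exists_bc-inverse} apply verbatim.

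The implications $(i)\Rightarrow(ii)\Rightarrow(iii)$ in the proof of Theorem \ref{th:exists_bc-inverse} never invoked the regularity of $t$, so they remain valid under our weaker hypothesis. For $(i)\Rightarrow(iv)$, I would combine Proposition \ref{pro:regular}, which ensures $t$ is regular as soon as $a$ is $(b,c)$-invertible, with the chain $(i)\Rightarrow(ii)\Rightarrow(iii)\Rightarrow(iv)$ of Theorem \ref{th:exists_bc-inverse}. The substantive content is therefore to verify $(iii)\Rightarrow(i)$ and $(iv)\Rightarrow(i)$ without a priori regularity of $t$.

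For $(iii)\Rightarrow(i)$ my plan is to use the regularity of $c$ together with $tR=cR$ to bootstrap the regularity of $t$: choose $g,h\in R$ with $t=cg$ and $c=th$; then $t=cc^{-}cg=cc^{-}t=thc^{-}t$, exhibiting $hc^{-}$ as an inner inverse of $t$. With $t$ now regular and $r(t)=r(b)$, one obtains $b(1-t^{-}t)=0$, giving $Rb\subseteq Rt$; the reverse inclusion is immediate from $t=cab$. Combined with $tR=cR$, Lemma \ref{lem1:bc-inverse} produces the $(b,c)$-inverse. For $(iv)\Rightarrow(i)$ the regularity of $b$ and $Rt=Rb$ yield $t$ regular directly by Lemma \ref{le:regular}(i); a symmetric argument using $l(t)=l(c)$ and $t$ regular produces $tR=cR$, and Lemma \ref{lem1:bc-inverse} again concludes.

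The main obstacle is exactly this bootstrap step. In Theorem \ref{th:exists_bc-inverse} the regularity of $t$ was a blanket hypothesis, whereas here it must be extracted from the regularity of $c$ (or $b$) combined with the one-sided ideal equality appearing in (iii) (or (iv)). Once that extraction is in hand, the remaining reasoning is a direct reuse of the arguments already developed for Theorem \ref{th:exists_bc-inverse}.
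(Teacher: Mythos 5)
Your proposal is correct and follows essentially the same route as the paper: the paper's (very terse) proof consists precisely of the observation that $tR=cR$ together with $c$ regular, respectively $Rt=Rb$ together with $b$ regular, forces $t$ to be regular, after which Theorem \ref{th:exists_bc-inverse} applies. Your explicit bootstrap computation $t=cc^{-}t=t(hc^{-})t$ is just the right-sided analogue of Lemma \ref{le:regular}(i), which the paper leaves implicit.
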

\begin{proof} We note that in item $(iii)$ condition $tR=cR$ together with $c$ is regular implies that $t$ is regular, in item $(iv)$ $Rt=Rb$ together with $b$ is regular implies that $t$ is regular.
 \end{proof}

\begin{remark}
The statements $(v)\Rightarrow (i)$ in Theorem \ref{th:exists_bc-inverse} is not true, when $b$ and $c$ are regular.
For example, set $R=\mathbb{Z}$, $b=c=1$ and $a=2$. Then $b$ and $c$ are regular.
It is easy to check that $l(t)=l(c)$ and $r(t)=r(b)$, but $t=2$ is not regular. Then $a$ is not $(b,c)$-invertible by Proposition \ref{pro:regular}.
\end{remark}

As a generalization of $(b,c)$-inverse, hybrid $(b,c)$-inverse and annihilator $(b,c)$-inverse  were introduced in \cite{MP.D1}.

\begin{definition}  \label{def:hybrid-bc-inverse}
Let $a, b, c, y \in R$. We say that  $y$ is a hybrid $(b, c)$-inverse of $a$
if
\begin{center} \label{eq:hybrid-bc-inverse}
$yay=y$, \quad  $yR = bR$, \quad   $r(y)=r(c)$.
\end{center}
\end{definition}

\begin{definition}  \label{def:annihilator-bc-inverse}
Let $a, b, c, y \in R$. We say that  $y$ is a annihilator $(b, c)$-inverse of $a$ if
\begin{center} \label{eq:hybrid-bc-inverse}
$yay=y$, \quad  $l(y)= l(b)$, \quad   $r(y)=r(c)$.
\end{center}
\end{definition}

In \cite{MP.D1}, Drazin pointed out that for any given $a, b, c \in R$,
\begin{center}
$(b,c)$-invertible $\Rightarrow$ hybrid $(b, c)$-invertible $\Rightarrow$ annihilator $(b, c)$-invertible.
\end{center}
In what follows, we will prove that the three generalized inverses are coincident whenever $t=cab$ is regular.
\begin{theorem} \label{th:all-inverses}Let  $a, b, c, y\in R$. If $t$ is regular, then the following conditions are equivalent:
\begin{enumerate}[(i)]
\item $y$ is the $(b,c)$-inverse of $a$.
\item $y$ is the  hybrid $(b,c)$-inverse of $a$.
\item  $y$ is the annihilator $(b,c)$-inverse of $a$.
\end{enumerate}
\end{theorem}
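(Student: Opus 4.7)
The plan is to focus on $(iii)\Rightarrow(i)$, since the reverse implications $(i)\Rightarrow(ii)\Rightarrow(iii)$ are recorded in \cite{MP.D1} and are routine: starting from the characterization in Lemma \ref{lem2:bc-inverse}, $Ry=Rc$ passes to $r(y)=r(c)$, and from the hybrid definition $yR=bR$ passes to $l(y)=l(b)$, by taking annihilators of one-sided ideals. So I assume $y$ is an annihilator $(b,c)$-inverse of $a$ and aim to verify the three defining conditions of Lemma \ref{lem2:bc-inverse}.

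The crucial first step is to extract the absorption identities $b=yab$ and $c=cay$. From $yay=y$ I get $(1-ya)y=0$, so $1-ya\in l(y)=l(b)$, giving $(1-ya)b=0$, i.e.\ $b=yab$. Symmetrically, $y(1-ay)=0$ combined with $r(y)=r(c)$ yields $c=cay$. With these in hand, I verify condition $(v)$ of Theorem \ref{th:exists_bc-inverse}: the inclusions $l(c)\subseteq l(t)$ and $r(b)\subseteq r(t)$ are immediate. For the reverse, if $cabx=0$ then $abx\in r(c)=r(y)$, so $yabx=0$; but $b=yab$, hence $bx=0$, yielding $r(t)\subseteq r(b)$. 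The dual argument, using $c=cay$ and $l(y)=l(b)$, gives $l(t)\subseteq l(c)$.

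Since $t$ is assumed regular, Theorem \ref{th:exists_bc-inverse} now ensures that $a$ has a $(b,c)$-inverse, and Proposition \ref{pro:regular} then upgrades $b$ and $c$ to regular elements. The element $y$ is itself regular because $yay=y$ exhibits $a$ as an inner inverse of $y$; so Lemma \ref{le:regular}(ii) applies to $y$, $b$, and $c$. Combined with $l(y)=l(b)$ and $r(y)=r(c)$, this produces $yR=rl(y)=rl(b)=bR$ and $Ry=lr(y)=lr(c)=Rc$. Lemma \ref{lem2:bc-inverse} then identifies $y$ itself as the $(b,c)$-inverse of $a$, i.e.\ $(i)$ holds.

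I expect the first step --- pulling the identities $b=yab$ and $c=cay$ out of the purely annihilator-theoretic data --- to be the conceptual heart of the argument. Everything downstream is bookkeeping: these identities translate annihilator conditions on $t$ into annihilator conditions on $b$ and $c$, which triggers Theorem \ref{th:exists_bc-inverse}; Proposition \ref{pro:regular} then provides the regularity needed to apply Lemma \ref{le:regular}(ii), which finally converts the annihilator equalities into the ideal equalities demanded by Lemma \ref{lem2:bc-inverse}.
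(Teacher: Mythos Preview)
Your proof is correct and shares its core with the paper's: both extract $b=yab$ and $c=cay$ from the annihilator hypotheses, verify $r(t)=r(b)$ and $l(t)=l(c)$, and invoke Theorem~\ref{th:exists_bc-inverse}(v) to conclude that \emph{some} $(b,c)$-inverse of $a$ exists. The endgame differs. The paper finishes by observing that any $(b,c)$-inverse $z$ is in particular an annihilator $(b,c)$-inverse, and then appeals to the uniqueness of the annihilator $(b,c)$-inverse (established in \cite{MP.D1}) to force $z=y$. You instead route through Proposition~\ref{pro:regular} to obtain regularity of $b$ and $c$, and then use Lemma~\ref{le:regular}(ii) to convert $l(y)=l(b)$ and $r(y)=r(c)$ into $yR=bR$ and $Ry=Rc$, landing on Lemma~\ref{lem2:bc-inverse} directly. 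Your route is slightly longer but more self-contained, since it avoids importing the uniqueness statement from \cite{MP.D1}; it is in fact the argument the paper deploys for Theorem~\ref{th:ebcinverse}, spliced onto the first half of the paper's proof of Theorem~\ref{th:all-inverses}.
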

\begin{proof}
$(i)\Rightarrow (ii)\Rightarrow (iii)$. These implications are clear.\par
$(iii)\Rightarrow (i)$.
By Definition \ref{def:annihilator-bc-inverse}, we have
$1-ay\in r(y)=r(c)$ and $1-ya\in l(y)=l(b)$. This implies that
$c=cay$ and $b=yab$. Next, we will prove that $r(t)=r(b)$ and $l(t)=l(c)$.
Combining with Theorem \ref{th:exists_bc-inverse} $(v)$, then we can find that
\begin{center}
$a$ is annihilator $(b, c)$-invertible $\Rightarrow$ $a$ is $(b, c)$-invertible.
\end{center}
It is clear that $r(b)\subseteq r(t)$. Let $w\in r(t)$. Then $cabw=0$ and hence $abw\in r(c)=r(y)$.
This implies that $yabw=0$. Then $bw=0$ since $yab=b$. This shows $r(t)\subseteq r(b)$. Therefore, $r(t)=r(b)$. Similarly,  we can prove that $l(c)=l(t)$. Since $a$ has a $(b,c)$-inverse $z$, then $a$ has the annihilator $(b,c)$-inverse  $z$ and by the uniqueness we have $z=y$.
\end{proof}

\begin{theorem} \label{th:ebcinverse}
Let  $a, b, c \in R$. If both $b$ and $c$ are  regular, then the statements (i)-(iii) in Theorem \ref{th:all-inverses} are equivalent.
\end{theorem}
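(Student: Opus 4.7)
The plan is to dispatch $(i) \Rightarrow (ii) \Rightarrow (iii)$ immediately from the definitions (neither implication needs any regularity), and concentrate the work on $(iii) \Rightarrow (i)$ under the assumption that $b$ and $c$ are regular. Note that we cannot simply copy the proof of Theorem~\ref{th:all-inverses}: that argument went through Theorem~\ref{th:exists_bc-inverse}~$(v) \Rightarrow (i)$, which required $t = cab$ to be regular, and the remark following Theorem~\ref{th:exists2_bcinverse} actually exhibits an element with $l(t) = l(c)$ and $r(t) = r(b)$ that is not $(b,c)$-invertible despite $b,c$ being regular. So a different route is needed.

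The key observation is that Lemma~\ref{le:regular}$(ii)$ gives the double-annihilator identities $rl(b) = bR$ and $lr(c) = Rc$ as soon as $b$ and $c$ are regular. Assume $y$ is an annihilator $(b,c)$-inverse of $a$, so that $yay = y$, $l(y) = l(b)$ and $r(y) = r(c)$. Taking the right annihilator of $l(y) = l(b)$ and the left annihilator of $r(y) = r(c)$ yields $rl(y) = bR$ and $lr(y) = Rc$. Since trivially $y \in rl(y)$ and $y \in lr(y)$, we conclude $y \in bR$ and $y \in Rc$, hence $yR \subseteq bR$ and $Ry \subseteq Rc$.

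For the reverse inclusions, we exploit the annihilator hypotheses directly: $1 - ay \in r(y) = r(c)$ forces $c = cay \in Ry$, and $1 - ya \in l(y) = l(b)$ forces $b = yab \in yR$. Thus $bR \subseteq yR$ and $Rc \subseteq Ry$, and combining with the previous paragraph we obtain $yR = bR$ and $Ry = Rc$. Since also $yay = y$, Lemma~\ref{lem2:bc-inverse} identifies $y$ as the $(b,c)$-inverse of $a$, which finishes $(iii) \Rightarrow (i)$.

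The main conceptual obstacle is just recognizing that the $t$-regular strategy of Theorem~\ref{th:all-inverses} does not transfer, so one must replace it by the double-annihilator argument based on Lemma~\ref{le:regular}$(ii)$; once that substitution is made, the verification of the Lemma~\ref{lem2:bc-inverse} hypotheses is routine.
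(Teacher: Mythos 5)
Your proof is correct and follows essentially the same route as the paper: both hinge on Lemma~\ref{le:regular}$(ii)$ to convert the annihilator conditions into $yR=bR$ and $Ry=Rc$, and then invoke Lemma~\ref{lem2:bc-inverse}. The only (harmless) difference is that the paper gets both inclusions at once by noting $y$ is itself regular, so $rl(y)=yR$ and $lr(y)=Ry$, whereas you obtain one inclusion from $y\in rl(y)$ and the other from the identities $b=yab$, $c=cay$.
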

\begin{proof}
We only need to prove that $(iii)\Rightarrow (i)$.
If $y$ is the annihilator $(b,c)$-inverse of $a$, then $l(y)=l(b)$, this gives that $rl(y)=rl(b)$.
Since $b$ and $y$ are regular, we have  $rl(b)=bR$ and $rl(y)=yR$ by Lemma \ref{le:regular} $(ii)$.
This implies that $yR=bR$. Similarly, we can obtain that $Ry=Rc$. Thus, it follows that $y$ is the $(b,c)$-inverse of $a$ by  Lemma \ref{lem2:bc-inverse}.
\end{proof}

The following lemma it is well known.

\begin{lemma} \label{le:idempotent}
Let $a \in R$ and $e \in E(R)$. Then the following conditions are equivalent:
\begin{enumerate}[(i)]
\item  $e \in eaeR \cap Reae$.
\item  $eae + 1 - e$ is invertible (or $ae + 1 - e$ is invertible).
\end{enumerate}
\end{lemma}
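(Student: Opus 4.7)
The plan is to split the proof into three short pieces: a one-line verification of $(ii)\Rightarrow(i)$, a passage to the corner subring $eRe$ for $(i)\Rightarrow(ii)$, and a factorization trick that relates the invertibility of $eae+1-e$ and $ae+1-e$.

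For $(ii)\Rightarrow(i)$, I would first observe that, from $e^{2}=e$, one has $eu=eae=ue$ where $u:=eae+1-e$. Once $u^{-1}$ is assumed to exist, this gives $e=(eu)u^{-1}=eae\cdot u^{-1}\in eaeR$ and symmetrically $e=u^{-1}(ue)=u^{-1}\cdot eae\in Reae$, which is already $(i)$.

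For the converse, I would pick $x_{1},x_{2}\in R$ with $e=eaex_{1}=x_{2}eae$. Multiplying $eaex_{1}=e$ on the right by $e$, and $x_{2}eae=e$ on the left by $e$, I may replace $x_{i}$ by $ex_{i}e\in eRe$, so that in the corner subring $eRe$ (which is unital with identity $e$) the element $eae$ has both a right and a left inverse. By the standard left-inverse equals right-inverse argument in a unital ring, these coincide with a single $b\in eRe$ satisfying $eae\cdot b=b\cdot eae=e$. The candidate inverse of $u$ is then $v:=b+1-e$, and I would verify $uv=vu=1$ by direct expansion, with the cross terms vanishing thanks to $eae(1-e)=(1-e)eae=0$ and $b(1-e)=(1-e)b=0$ (the latter because $b\in eRe$). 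This short verification is the heart of the argument; I do not expect any real obstacle beyond keeping careful track of which side of $e$ each factor sits on.

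Finally, to handle the parenthetical equivalence with $ae+1-e$, I would write
\[
ae+1-e=(eae+1-e)+(1-e)ae=g+h,
\]
with $g:=eae+1-e$ and $h:=(1-e)ae$. Because $e(1-e)=0$ one has $h^{2}=0$, so $1+h$ is a unit with inverse $1-h$; and because $eae(1-e)=0$ one has $gh=h$. Therefore $ae+1-e=g+gh=g(1+h)$, and since $1+h$ is always invertible, this product is a unit if and only if $g=eae+1-e$ is. Combined with the two directions above, this completes the equivalence of $(i)$ and $(ii)$ in either of the stated forms.
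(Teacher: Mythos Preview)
Your proof is correct. All three pieces are carried out cleanly: the $(ii)\Rightarrow(i)$ direction via $eu=ue=eae$, the passage to the corner ring $eRe$ for $(i)\Rightarrow(ii)$, and the factorization $ae+1-e=(eae+1-e)(1+(1-e)ae)$ with the nilpotent second factor. One minor remark: in the step where you replace $x_{1}$ by $ex_{1}e$, you should also multiply on the left by $e$ (not just on the right), but since $eae\cdot e=eae$ this is harmless and the conclusion $eae\cdot(ex_{1}e)=e$ indeed holds.

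As for comparison with the paper: the authors do not prove this lemma at all. They introduce it with the sentence ``The following lemma it is well known'' and give no argument. So there is nothing to compare against; your write-up supplies the omitted details.
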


\begin{theorem} \label{th:d-bcinverse}
Let  $a, b, c, d \in R$ such that the $(b,c)$-inverse of $a$ exists. Let $e=bb^{-}$ where $b^{-}$ are fixed, but arbitrary inner inverses of $b$.
Then the
following statements are equivalent:
\begin{enumerate}[(i)]
\item $d$ has a $(b,c)$-inverse.
\item $e\in ea^{\|(b,c)}deR\cap Rea^{\|(b,c)}de$.
\item  $a^{\|(b,c)}de+1-e$ is invertible.
\end{enumerate}
In this case,
\begin{equation} \label{eq:d-bcinverse}  d^{\|(b,c)}=(a^{\|(b,c)}de+1-e)^{-1}a^{\|(b,c)}. \end{equation}
\end{theorem}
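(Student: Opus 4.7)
The plan is to first establish (ii) $\Leftrightarrow$ (iii) directly from Lemma \ref{le:idempotent}, and then to prove (i) $\Leftrightarrow$ (iii), the latter also supplying formula \eqref{eq:d-bcinverse}. Write $y := a^{\|(b,c)}$ for brevity. By Lemma \ref{lem2:bc-inverse}, $yR = bR$ and $Ry = Rc$; since $bR = bb^- R = eR$, this gives $ey = y$, hence $eyde = yde$, and Lemma \ref{le:idempotent} applied to $yd$ and $e$ yields (ii) $\Leftrightarrow$ (iii).

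Before attacking (i) $\Leftrightarrow$ (iii) I record three auxiliary identities. Right-multiplying $yab = b$ by $b^-$ gives $yae = e$. For any $z \in eR$ one has $ez = z$, hence $ydz = yd(ez) = (yde)z$. And, setting $u := yde + 1 - e$, one computes $ub = (yde + 1-e)b = ydb + b - eb = ydb$ since $eb = b$.

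For (i) $\Rightarrow$ (ii), assume $z := d^{\|(b,c)}$ exists. Right-multiplying $zdb = b$ by $b^-$ yields $zde = e$; since $Rz = Rc = Ry$, writing $z = s'y$ for some $s' \in R$ gives $e = s'(yde) \in R\, yde$. For the opposite containment, $y \in Rc$ permits writing $y = wc$, whence $ydz = w(cdz) = wc = y$; combining this with $z \in bR = eR$ (so $ez = z$) we obtain $y = (yde)z$, and therefore $e = yae = (yde)(zae) \in yde\, R$. Hence (ii) holds and, by Lemma \ref{le:idempotent}, so does (iii).

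For (iii) $\Rightarrow$ (i), let $v := u^{-1}$ and set $z := vy$. Since $eu = ue = yde$, $e$ commutes with $v$, so $ez = evy = vey = vy = z$, placing $z \in eR$. Multiplying $vu = 1$ on the right by $e$ yields $v(ue) = e$, i.e., $vyde = e$, so $e = z \cdot de \in zR$ and thus $zR = eR = bR$. The inclusion $Rz \subseteq Rc$ is immediate from $y \in Rc$, while $c = cay = ca(uv)y = (cau)z$ supplies the reverse. From $ub = ydb$ we get $zdb = vub = b$, so $zd$ acts as the identity on $bR = zR$ and $zdz = z$; finally $cdz = (cau)(zdz) = (cau)z = c$. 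Lemma \ref{lem2:bc-inverse} then identifies $z = vy$ as $d^{\|(b,c)}$, which is precisely \eqref{eq:d-bcinverse}. The most delicate point is the (i) $\Rightarrow$ (ii) calculation $e = (yde)(zae)$: each of the ingredients $yae = e$, $ydz = y$, and $ydz = (yde)z$ is a one-liner, but combining them to exhibit $e$ as a right multiple of $yde$ rather than merely of $y$ is the substantive content of the proof; the remainder amounts to careful bookkeeping with the defining equations of the $(b,c)$-inverse.
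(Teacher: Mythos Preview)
Your proof is correct and follows essentially the same route as the paper: Lemma~\ref{le:idempotent} handles (ii)$\Leftrightarrow$(iii), the implication (i)$\Rightarrow$(ii) is obtained by exhibiting $e=(yde)(zae)$ and $e=s'(yde)$ from the defining relations of the two $(b,c)$-inverses, and (iii)$\Rightarrow$(i) is the explicit verification via Lemma~\ref{lem2:bc-inverse} that $z=u^{-1}y$ satisfies $zdz=z$, $zR=bR$, $Rz=Rc$. The only notable difference is cosmetic: in (i)$\Rightarrow$(ii) the paper introduces an inner inverse $c^{-}$ and works through the identity $b=ea^{\|(b,c)}c^{-}cab$, whereas you bypass $c^{-}$ entirely by using $y\in Rc$ and $Rz=Ry$ directly---a slightly cleaner bookkeeping choice, but the same underlying computation.
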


\begin{proof}
  Firstly, as  $a^{\|(b,c)}$ exists we have $a^{\|(b,c)}\in bR\cap Rc$  by Lemma \ref{lem2:bc-inverse}. Therefore
\begin{equation} \label{eq1:d-bcinverse}
a^{\|(b,c)}=bb^{-}a^{\|(b,c)}=a^{\|(b,c)}c^{-}c.
\end{equation}
From Definition \ref{def:bc-inverse} we  have  $b=a^{\|(b,c)}ab$. Combining with (\ref{eq1:d-bcinverse}), we can write
\begin{equation}\label{eq2:d-bcinverse}
b=ea^{\|(b,c)}c^{-}cab.
\end{equation}
$(i)\Rightarrow(ii)$. Suppose that $d^{\|(b,c)}$ exists. By Definition \ref{def:bc-inverse},  we also have  $c=cdd^{\|(b,c)}$. Substituting this into (\ref{eq2:d-bcinverse}) yields
\begin{equation*}
b=ea^{\|(b,c)}c^{-}(cdd^{\|(b,c)})ab=ea^{\|(b,c)}dd^{\|(b,c)}ab.
\end{equation*}
Multiplying on the right by $b^{-}$ we obtain $e=ea^{\|(b,c)}dd^{\|(b,c)}ae$. Since $d^{\|(b,c)}=ed^{\|(b,c)}$, which follows by  interchanging  $a^{\|(b,c)}$ and $d^{\|(b,c)}$   in (\ref{eq1:d-bcinverse}), we get   $e=ea^{\|(b,c)}ded^{\|(b,c)}ae$. This implies that $e\in ea^{\|(b,c)}deR$.
Similarly,  we can prove that $e\in Rea^{\|(b,c)}de$.\par

$(ii)\Rightarrow(iii)$ See Lemma \ref{le:idempotent}.\par

$(iii)\Rightarrow(i)$ Firstly we note that $ea^{\|(b,c)}=a^{\|(b,c)}$ by (\ref{eq1:d-bcinverse}).  Set $x=ea^{\|(b,c)}de+1-e$. It is clear that $ex=xe$ and $ex^{-1}=x^{-1}e$. Write $y=x^{-1}a^{\|(b,c)}$.
Next, we verify that $y$ is the $(b, c)$-inverse of $d$.

\textbf{Step 1}. $ydy=y$. Indeed,

Using $a^{\|(b,c)}=ea^{\|(b,c)}$, we get
\begin{eqnarray*}
ydy&=&x^{-1}a^{\|(b,c)}dx^{-1}a^{\|(b,c)}=x^{-1}ea^{\|(b,c)}dx^{-1}ea^{\|(b,c)}\\
&=&x^{-1}(ea^{\|(b,c)}de+1-e)ex^{-1}a^{\|(b,c)}\\
&=&x^{-1}ea^{\|(b,c)}=x^{-1}a^{\|(b,c)}=y.
\end{eqnarray*}

\textbf{Step 2}. $bR=yR$.

On account of   $a^{\|(b,c)}=ea^{\|(b,c)}$ and $(1-e)b=0$, one can get
\begin{center}
$b=x^{-1}(ea^{\|(b,c)}de+1-e)b=x^{-1}ea^{\|(b,c)}deb=x^{-1}a^{\|(b,c)}deb=ydeb \in yR$
\end{center}
Meanwhile,  $y=x^{-1}a^{\|(b,c)}=x^{-1}ea^{\|(b,c)}=ex^{-1}a^{\|(b,c)}\in bR$. This guarantees $bR=yR$.

\textbf{Step 3}. $Rc=Ry$.

From Definition \ref{def:bc-inverse}, we  have  $c=caa^{\|(b,c)}$. This leads to
$c=caxx^{-1}a^{\|(b,c)}=caxy\in Ry$. On the other hand, from (\ref{eq1:d-bcinverse}) we conclude that $y=x^{-1}a^{\|(b,c)}=x^{-1}a^{\|(b,c)}c^{-}c\in Rc$. It means that $Rc=Ry$.
\end{proof}

Similarly, we can state the analogue of Theorem \ref{th:d-bcinverse}.

\begin{theorem} \label{th2:d-bcinverse}
Let  $a, b, c, d \in R$ such that the $(b,c)$-inverse of $a$ exists.
Let $f=c^{-}c$ where $c^{-}$ are fixed, but arbitrary inner inverses of $c$.
Then the following statements are equivalent:
\begin{enumerate}[(i)]
\item  $d$ has a $(b,c)$-inverse.
\item $f\in fda^{\|(b,c)}fR\cap Rfda^{\|(b,c)}f$.
\item $fda^{\|(b,c)}+1-f$ is invertible.
\end{enumerate}
In this case,
\begin{equation} \label{eq2:d-bcinverse-2}
d^{\|(b,c)}= a^{\|(b,c)}(fda^{\|(b,c)}+1-f)^{-1}.
\end{equation}
\end{theorem}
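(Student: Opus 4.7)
The plan is to mirror the proof of Theorem \ref{th:d-bcinverse}, which is the left-right dual of the present statement, with $f = c^{-}c$ in place of $e = bb^{-}$. Throughout, I will exploit the identities $a^{\|(b,c)} = bb^{-}a^{\|(b,c)} = a^{\|(b,c)}c^{-}c = a^{\|(b,c)}f$ coming from $a^{\|(b,c)} \in bR \cap Rc$ (Lemma \ref{lem2:bc-inverse}), together with $c = caa^{\|(b,c)}$ and $b = a^{\|(b,c)}ab$ from Definition \ref{def:bc-inverse}. These combine into the dual key identity
\begin{equation*}
c = cabb^{-}a^{\|(b,c)}f,
\end{equation*}
obtained from $c = caa^{\|(b,c)}$ by inserting $bb^{-}$ and replacing the final $a^{\|(b,c)}$ by $a^{\|(b,c)}f$.

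For $(i)\Rightarrow(ii)$, assume $d^{\|(b,c)}$ exists, so $c = cdd^{\|(b,c)}$, $b = d^{\|(b,c)}db$, and $d^{\|(b,c)} = d^{\|(b,c)}f$ (since $d^{\|(b,c)} \in Rc$). Substituting $b = d^{\|(b,c)}db$ into the key identity and collapsing $bb^{-}a^{\|(b,c)} = a^{\|(b,c)}$ yields $c = cad^{\|(b,c)}da^{\|(b,c)}f$; premultiplying by $c^{-}$ gives $f = fad^{\|(b,c)}da^{\|(b,c)}f$, and inserting $d^{\|(b,c)} = d^{\|(b,c)}f$ produces $f = (fad^{\|(b,c)})(fda^{\|(b,c)}f) \in Rfda^{\|(b,c)}f$. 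For the dual inclusion, $d^{\|(b,c)} \in bR = a^{\|(b,c)}R$ (Lemma \ref{lem2:bc-inverse}) lets us write $d^{\|(b,c)} = a^{\|(b,c)}v$, turning $c = cdd^{\|(b,c)}$ into $c = cda^{\|(b,c)}v$ and hence $f = fda^{\|(b,c)}v = fda^{\|(b,c)}fv \in fda^{\|(b,c)}fR$. The equivalence $(ii)\Leftrightarrow(iii)$ is then a direct application of Lemma \ref{le:idempotent} to the idempotent $f$ and the element $da^{\|(b,c)}$, after noting $fda^{\|(b,c)}f = fda^{\|(b,c)}$.

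For $(iii)\Rightarrow(i)$, set $z = fda^{\|(b,c)} + 1 - f$ and define $y = a^{\|(b,c)}z^{-1}$; the aim is to verify the three conditions of Lemma \ref{lem2:bc-inverse} for $y$ with respect to $d$. The pivotal observation is $fz = fda^{\|(b,c)} = zf$, which forces $z^{-1}$ to commute with $f$; combined with $a^{\|(b,c)} = a^{\|(b,c)}f$, this also gives $y = yf$. Then $ydy = y \cdot fda^{\|(b,c)} \cdot z^{-1} = y(fz)z^{-1} = yf = y$. The equality $yR = bR$ follows from $y = bb^{-}a^{\|(b,c)}z^{-1} \in bR$ and $b = a^{\|(b,c)}ab = (yz)(ab) \in yR$; the equality $Ry = Rc$ follows from $y = yf = (yc^{-})c \in Rc$ together with $cdy = c \cdot fda^{\|(b,c)} \cdot z^{-1} = (cf)z z^{-1} = c$, so $c = (cd)y \in Ry$. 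The main obstacle is bookkeeping the one-sided simplifications $a^{\|(b,c)}f = a^{\|(b,c)}$ and $d^{\|(b,c)} = d^{\|(b,c)}f$ in the correct positions; once this is handled, every verification reduces to the commutation identity $fz = zf$ and the regularity relation $cf = c$.
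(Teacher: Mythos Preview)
Your proof is correct and follows exactly the route the paper indicates: it dualizes the argument of Theorem~\ref{th:d-bcinverse} by replacing $e=bb^{-}$ with $f=c^{-}c$ and exploiting $a^{\|(b,c)}f=a^{\|(b,c)}$ together with the commutation $fz=zf$ for $z=fda^{\|(b,c)}+1-f$. The paper itself gives no details beyond ``similarly,'' so your write-up is in fact more explicit than the original while remaining faithful to its method.
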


\begin{remark} \rm  In case that both $a^{\|(b,c)}$ and $d^{\|(b,c)}$ exist, from Theorem \ref{th:d-bcinverse} and \ref{th2:d-bcinverse},  it may be concluded that
\begin{equation}
\begin{split} \label{eq:d_(b,c)_inverse}
(a^{\|(b,c)}de+1-e)^{-1}=d^{\|(b,c)}ae+1-e;\\
(fda^{\|(b,c)}+1-f)^{-1}=fad^{\|(b,c)}+1-f.
 \end{split}
\end{equation}
 Indeed, since
$d^{\|(b,c)}=(a^{\|(b,c)}de+1-e)^{-1}a^{\|(b,c)}$, we have  $(a^{\|(b,c)}de+1-e)d^{\|(b,c)}=a^{\|(b,c)}.$
Hence,
$$(a^{\|(b,c)}de+1-e)(d^{\|(b,c)}ae+1-e)= a^{\|(b,c)}ae+1-e=1,$$
where the last identity is due to the fact that $a^{\|(b,c)}ae=e$, because $b=a^{\|(b,c)}ab$. Interchanging   the roles of $a$ and $d$  in Theorem \ref{th:d-bcinverse} it follows  that
 $(d^{\|(b,c)}ae+1-e)(a^{\|(b,c)}de+1-e)=1$ and, in consequence, the first identity in (\ref{eq:d_(b,c)_inverse}) holds. The second identity in (\ref{eq:d_(b,c)_inverse}) can be proved in the same manner.
\end{remark}

For any two idempotents $p$ and $q$, we replace $b$ and $c$ by $p$ and $1-q$ respectively in Theorem \ref{th:d-bcinverse} and \ref{th2:d-bcinverse},
we obtain the following corollary.
\begin{corollary}
\cite[Theorem 3.3]{D.M1}
Let $p, q \in  E(R)$ and let $a \in R$ be such that $a^{\times}$ exists. Then for
$d \in R$ the following statements are equivalent:
\begin{enumerate}[(i)]
\item  $d^{\times}$ exists.
\item $1-p+a^{\times}dp$ is invertible.
\item $q+(1-q)da^{\times}$ is invertible.
\end{enumerate}
\end{corollary}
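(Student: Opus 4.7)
The plan is to recognize the corollary as a direct specialization of Theorems \ref{th:d-bcinverse} and \ref{th2:d-bcinverse} to the idempotent case $b=p$, $c=1-q$, using the identification between the image-kernel $(p,q)$-inverse and the $(p,1-q)$-inverse recorded in the introduction (Proposition 3.4 of \cite{GKM}). Thus $a^{\times}=a^{\|(p,1-q)}$ and $d^{\times}$ exists precisely when $d^{\|(p,1-q)}$ exists.

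First I would note that since $p$ and $1-q$ are idempotents, they are regular and one may take as inner inverses $p^{-}=p$ and $(1-q)^{-}=1-q$. With these fixed choices, the idempotents appearing in the hypotheses of Theorems \ref{th:d-bcinverse} and \ref{th2:d-bcinverse} collapse to
\[
e=p\,p^{-}=p^{2}=p, \qquad f=(1-q)^{-}(1-q)=(1-q)^{2}=1-q.
\]

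Next, I would invoke Theorem \ref{th:d-bcinverse} with $b=p$, $c=1-q$. The equivalence of (i) and (iii) of that theorem reads: $d^{\|(p,1-q)}$ exists iff $a^{\|(p,1-q)}de+1-e$ is invertible, which after substitution becomes
\[
a^{\times}dp+1-p \;=\; 1-p+a^{\times}dp.
\]
This gives the equivalence of (i) and (ii) in the corollary. Similarly, applying Theorem \ref{th2:d-bcinverse} with the same choices, $d^{\|(p,1-q)}$ exists iff $fda^{\|(p,1-q)}+1-f$ is invertible, which becomes
\[
(1-q)da^{\times}+q \;=\; q+(1-q)da^{\times},
\]
yielding the equivalence of (i) and (iii).

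There is essentially no obstacle beyond the bookkeeping: the only thing to be careful about is the consistent translation $c=1-q$ (not $c=q$), which is exactly what makes the image-kernel inverse fit into the Bott--Duffin, and hence the $(b,c)$-inverse, framework. Once that identification is in place the corollary is a one-line substitution into the two theorems already proved.
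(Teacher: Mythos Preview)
Your proposal is correct and is exactly the approach the paper takes: the corollary is stated immediately after the remark that one obtains it by replacing $b$ and $c$ with $p$ and $1-q$ in Theorems \ref{th:d-bcinverse} and \ref{th2:d-bcinverse}. Your added details (the choice $p^{-}=p$, $(1-q)^{-}=1-q$ giving $e=p$, $f=1-q$, and the identification $a^{\times}=a^{\|(p,1-q)}$) simply make explicit what the paper leaves implicit.
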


\section{Characterizations of elements with equal $(b,c)$-idempotents}

Let $a^{\|(b,c)}$ exists. Since $a^{\|(b,c)}$ is an outer inverse of $a$, when it exists, then  both $a^{\|(b,c)}a$ and $aa^{\|(b,c)}$ are idempotents. These will be referred  to as the  $(b,c)$-idempotents associated with $a$. We are interested in finding characterizations of those elements in the ring with  equal $(b,c)$-idempotents.

In what follows, we will give necessary and sufficient conditions for $aa^{\|(b,c)} =dd^{\|(b,c)}$. We firstly establish an auxiliary result.

\begin{lemma} \label{le:bc-inverses}
Let $a, b, c, d\in R$ such that $a^{\|(b,c)}$ and $d^{\|(b,c)}$ exist. Let $e=bb^{-}$ and $f=c^{-}c$, where $b^{-}$ and $c^{-}$ are fixed, but arbitrary inner inverses of $b$ and $c$, respectively. Then
\begin{enumerate}[(i)]
\item $d^{\|(b,c)}=d^{\|(b,c)}aa^{\|(b,c)}=a^{\|(b,c)}ad^{\|(b,c)}$.

\item $a^{\|(b,c)}=a^{\|(b,c)}dd^{\|(b,c)}=d^{\|(b,c)}da^{\|(b,c)}$.
\item $e=ed^{\|(b,c)}aa^{\|(b,c)}de=ea^{\|(b,c)}ae=ed^{\|(b,c)}de.$
\item $f=fda^{\|(b,c)}ad^{\|(b,c)}f=fdd^{\|(b,c)}f=faa^{\|(b,c)}f$.
\end{enumerate}
\begin{proof}
$(i)$. In view of  (\ref{eq:d-bcinverse}) and (\ref{eq2:d-bcinverse-2}), with the notation $e=bb^{-}$ and $f=c^{-}c$, we have
\begin{eqnarray*}
d^{\|(b,c)}&=&(a^{\|(b,c)}de+1-e)^{-1}a^{\|(b,c)}=d^{\|(b,c)}aa^{\|(b,c)}\\
&=&a^{\|(b,c)}(fda^{\|(b,c)}+1-f)^{-1}=a^{\|(b,c)}ad^{\|(b,c)}.
\end{eqnarray*}

$(ii)$. We get these equalities  by  interchanging  the roles of $a^{\|(b,c)}$ and $d^{\|(b,c)}$   in previous results.

$(iii)$. By the Definition \ref{def:bc-inverse}, we have  $b=d^{\|(b,c)}db$. Multiplying on the right by $b^{-}$ gives  $e=d^{\|(b,c)}de$.
Similarly, $e=ea^{\|(b,c)}ae$. Multiplying  $(i)$ on the right by $de$ leads to $e=ed^{\|(b,c)}aa^{\|(b,c)}de$.

$(iv)$.  By the definition \ref{def:bc-inverse}, we have  $c=cad^{\|(b,c)}$ and, multiplying on the left by $c^{-}$, we get  $f=fdd^{\|(b,c)}$.
Similarly, $faa^{\|(b,c)}f$. Multiplying  $(ii)$ on the left by $fd$, one can see $f=fd a^{\|(b,c)}ad^{\|(b,c)}f$.
\end{proof}
\end{lemma}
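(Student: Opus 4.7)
The plan is to exploit the two explicit formulas for $d^{\|(b,c)}$ coming from Theorem \ref{th:d-bcinverse} and Theorem \ref{th2:d-bcinverse}, combined with the basic identities for $(b,c)$-inverses in Definition \ref{def:bc-inverse} and the membership $d^{\|(b,c)} \in bR \cap Rc$ (and the analogous one for $a^{\|(b,c)}$) from Lemma \ref{lem2:bc-inverse}. The latter immediately implies the convenient ``absorption'' relations $ed^{\|(b,c)}=d^{\|(b,c)}$, $d^{\|(b,c)}f=d^{\|(b,c)}$, $ea^{\|(b,c)}=a^{\|(b,c)}$, and $a^{\|(b,c)}f=a^{\|(b,c)}$, which I will use throughout without further comment.

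For item $(i)$, I would start from $d^{\|(b,c)}=(a^{\|(b,c)}de+1-e)^{-1}a^{\|(b,c)}$ of Theorem \ref{th:d-bcinverse}, and right-multiply by $aa^{\|(b,c)}$; since $a^{\|(b,c)}aa^{\|(b,c)}=a^{\|(b,c)}$, the factor on the right collapses and the equality $d^{\|(b,c)}aa^{\|(b,c)}=d^{\|(b,c)}$ drops out. For the second identity in $(i)$, I would instead use $d^{\|(b,c)}=a^{\|(b,c)}(fda^{\|(b,c)}+1-f)^{-1}$ from Theorem \ref{th2:d-bcinverse} and left-multiply by $a^{\|(b,c)}a$, using again $a^{\|(b,c)}aa^{\|(b,c)}=a^{\|(b,c)}$. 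Item $(ii)$ is then obtained by swapping the roles of $a$ and $d$, since both $a^{\|(b,c)}$ and $d^{\|(b,c)}$ exist and the formulas are symmetric under this interchange.

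For $(iii)$, the identities $b=a^{\|(b,c)}ab$ and $b=d^{\|(b,c)}db$ from Definition \ref{def:bc-inverse}, right-multiplied by $b^{-}$, yield $e=a^{\|(b,c)}ae$ and $e=d^{\|(b,c)}de$; left-multiplying each by $e$ and using $ea^{\|(b,c)}=a^{\|(b,c)}$ and $ed^{\|(b,c)}=d^{\|(b,c)}$ gives the two middle forms. The first, mixed expression then follows by inserting $d^{\|(b,c)}=d^{\|(b,c)}aa^{\|(b,c)}$ from $(i)$ into $e=d^{\|(b,c)}de$. Item $(iv)$ is the dual: from $c=cad^{\|(b,c)}$ and $c=caa^{\|(b,c)}$ I left-multiply by $c^{-}$ and then by $f$, then use $a^{\|(b,c)}=a^{\|(b,c)}dd^{\|(b,c)}$ from $(ii)$ to deduce the mixed identity.

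I do not expect any real obstacle: once the two inversion formulas of Section~3 are in hand, everything reduces to rewriting a product and applying the outer-inverse relation $xax=x$ together with the absorption rules coming from $bR$- and $Rc$-membership. The only thing to be a bit careful about is the order of multiplications when verifying $e=ed^{\|(b,c)}aa^{\|(b,c)}de$ and its dual, to make sure one actually lands in the correct idempotent rather than some variant; but this is a bookkeeping matter, not a conceptual one.
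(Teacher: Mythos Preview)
Your proposal is correct and follows essentially the same route as the paper's own proof: both use the formulas \eqref{eq:d-bcinverse} and \eqref{eq2:d-bcinverse-2} together with $a^{\|(b,c)}aa^{\|(b,c)}=a^{\|(b,c)}$ to obtain $(i)$, swap $a\leftrightarrow d$ for $(ii)$, and then derive $(iii)$ and $(iv)$ from Definition~\ref{def:bc-inverse} plus the absorption relations $ea^{\|(b,c)}=a^{\|(b,c)}$, $a^{\|(b,c)}f=a^{\|(b,c)}$, etc. One small slip: in your sketch of $(iv)$ the relation coming from Definition~\ref{def:bc-inverse} for $d$ is $c=cdd^{\|(b,c)}$, not $c=cad^{\|(b,c)}$ (the paper itself has the same typo); with that correction your argument goes through verbatim.
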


\begin{theorem}
Let $a,b,c,d\in R$ such that $a^{\|(b,c)}$ and $d^{\|(b,c)}$ exist. Then the following statements are equivalent:
\begin{enumerate}[(i)]
\item  $aa^{\|(b,c)} =dd^{\|(b,c)}$.
\item  $aa^{\|(b,c)}dd^{\|(b,c)}=dd^{\|(b,c)}aa^{\|(b,c)}$.
\item $ad^{\|(b,c)}da^{\|(b,c)}=da^{\|(b,c)}ad^{\|(b,c)}$.
\item  $ad^{\|(b,c)} \in R^{\#}$ and $(ad^{\|(b,c)})^{\#} =d a^{\|(b,c)}$.
\item  $da^{\|(b,c)} \in R^{\#}$ and $(da^{\|(b,c)})^{\#} =a d^{\|(b,c)}$.
\end{enumerate}
\end{theorem}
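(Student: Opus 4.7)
The plan hinges on extracting two pivotal identities from Lemma~\ref{le:bc-inverses} and then deriving all the equivalences from them. From Lemma~\ref{le:bc-inverses}$(ii)$ we have $a^{\|(b,c)} = d^{\|(b,c)}da^{\|(b,c)}$, so left-multiplication by $a$ yields
$$ ad^{\|(b,c)}da^{\|(b,c)} = aa^{\|(b,c)}. $$
Symmetrically, Lemma~\ref{le:bc-inverses}$(i)$ gives $d^{\|(b,c)} = a^{\|(b,c)}ad^{\|(b,c)}$, and left-multiplication by $d$ yields
$$ da^{\|(b,c)}ad^{\|(b,c)} = dd^{\|(b,c)}. $$
With these identities in hand, $(i)\Leftrightarrow(iii)$ is immediate: statement $(iii)$ reads exactly $aa^{\|(b,c)} = dd^{\|(b,c)}$.

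For $(i)\Leftrightarrow(ii)$, set $p = aa^{\|(b,c)}$ and $q = dd^{\|(b,c)}$. From the identities $a^{\|(b,c)} = a^{\|(b,c)}dd^{\|(b,c)}$ and $d^{\|(b,c)} = d^{\|(b,c)}aa^{\|(b,c)}$ in Lemma~\ref{le:bc-inverses}, left-multiplication by $a$ and $d$ respectively gives $p = pq$ and $q = qp$. Condition $(ii)$ asserts $pq = qp$; combined with $p = pq$ and $q = qp$ this forces $p = q$, which is $(i)$. The converse is trivial.

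For $(i)\Rightarrow(iv)$, assume $aa^{\|(b,c)} = dd^{\|(b,c)}$. The two pivotal identities together with $(i)$ deliver the commutation $ad^{\|(b,c)}\cdot da^{\|(b,c)} = da^{\|(b,c)}\cdot ad^{\|(b,c)}$. The remaining inner/outer conditions
$$ ad^{\|(b,c)}\, da^{\|(b,c)}\, ad^{\|(b,c)} = ad^{\|(b,c)}, \qquad da^{\|(b,c)}\, ad^{\|(b,c)}\, da^{\|(b,c)} = da^{\|(b,c)} $$
follow from one further application of Lemma~\ref{le:bc-inverses}$(i)$ and $(ii)$; for instance, the first reduces via the pivotal identity to $aa^{\|(b,c)}\cdot ad^{\|(b,c)} = a\bigl(a^{\|(b,c)}ad^{\|(b,c)}\bigr) = ad^{\|(b,c)}$. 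Hence $(ad^{\|(b,c)})^{\#} = da^{\|(b,c)}$. The implication $(i)\Rightarrow(v)$ is entirely parallel.

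Finally, $(iv)\Rightarrow(iii)$ and $(v)\Rightarrow(iii)$ are immediate, since any group-invertible element commutes with its group inverse. This closes the loop $(i)\Leftrightarrow(ii)\Leftrightarrow(iii)\Leftarrow(iv),(v)$ together with $(i)\Rightarrow(iv),(v)$. I do not anticipate a real obstacle: once the two pivotal identities of the first paragraph are isolated, every remaining verification is a short algebraic manipulation relying only on Lemma~\ref{le:bc-inverses}; the main thing to watch is simply choosing the correct identity from that lemma at each step.
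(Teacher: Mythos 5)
Your proposal is correct and follows essentially the same route as the paper: both extract the identities $aa^{\|(b,c)}=ad^{\|(b,c)}da^{\|(b,c)}$ and $dd^{\|(b,c)}=da^{\|(b,c)}ad^{\|(b,c)}$ from Lemma \ref{le:bc-inverses} and then read off $(i)\Leftrightarrow(ii)\Leftrightarrow(iii)$, verifying the group-inverse axioms for $(iv)$ and $(v)$ by the same short computations. No gaps.
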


\begin{proof}
$(i)\Leftrightarrow (ii) \Leftrightarrow (iii)$. From Lemma \ref{le:bc-inverses} we obtain
\begin{equation} \label{eq:idempotents}
\begin{split}
aa^{\|(b,c)}=aa^{\|(b,c)}dd^{\|(b,c)}=ad^{\|(b,c)}da^{\|(b,c)};\\
dd^{\|(b,c)}=dd^{\|(b,c)}aa^{\|(b,c)}=da^{\|(b,c)}ad^{\|(b,c)}.
\end{split}
\end{equation}
This leads to
\begin{eqnarray*}
aa^{\|(b,c)}=dd^{\|(b,c)}&\Leftrightarrow& aa^{\|(b,c)}dd^{\|(b,c)}=dd^{\|(b,c)}aa^{\|(b,c)}\\
&\Leftrightarrow& ad^{\|(b,c)}da^{\|(b,c)}=da^{\|(b,c)}ad^{\|(b,c)}.
\end{eqnarray*}

$(iii)\Leftrightarrow (iv)$. Set $x= da^{\|(b,c)}$. We will prove that $x$ is the group inverse of $ad^{\|(b,c)}$.
Combining $(iii)$ with Lemma \ref{le:bc-inverses}, we get
\begin{eqnarray*}
xad^{\|(b,c)}&=&da^{\|(b,c)}ad^{\|(b,c)}=ad^{\|(b,c)}da^{\|(b,c)}=ad^{\|(b,c)}x;\\
ad^{\|(b,c)}x ad^{\|(b,c)}&=& a(d^{\|(b,c)}da^{\|(b,c)})ad^{\|(b,c)}=a(a^{\|(b,c)}ad^{\|(b,c)})=ad^{\|(b,c)}; \nonumber\\
xad^{\|(b,c)}x&=& x  ad^{\|(b,c)}da^{\|(b,c)}= xaa^{\|(b,c)}=da^{\|(b,c)}aa^{\|(b,c)}=x.
\end{eqnarray*}
This implies that $ad^{\|(b,c)} \in R^{\#}$ and $(ad^{\|(b,c)})^{\#} = da^{\|(b,c)}$. Conversely, if the latter holds, then $da^{\|(b,c)}ad^{\|(b,c)}=ad^{\|(b,c)}da^{\|(b,c)}$.\par
$(iii)\Leftrightarrow (v)$. The proof is similar to the previous equivalence.
\end{proof}

We state the result in terms of the other $(b,c)$-idempotent.
\begin{theorem}
Let $a,b,c,d\in R$ such that $a^{\|(b,c)}$ and $d^{\|(b,c)}$ exist. Then the following statements are equivalent:
\begin{enumerate}[(i)]
\item  $a^{\|(b,c)}a =d^{\|(b,c)}d$.
\item $d^{\|(b,c)}da^{\|(b,c)}a=a^{\|(b,c)}ad^{\|(b,c)}d$.
\item  $a^{\|(b,c)}dd^{\|(b,c)}a=d^{\|(b,c)}aa^{\|(b,c)}d$.
\item  $a^{\|(b,c)}d \in R^{\#}$ and $(a^{\|(b,c)}d)^{\#} =d^{\|(b,c)}a$.
\item  $d^{\|(b,c)}a \in R^{\#}$ and $(d^{\|(b,c)}a)^{\#} =a^{\|(b,c)}d$.
\end{enumerate}
\end{theorem}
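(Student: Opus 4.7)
The plan is to mirror the strategy of the preceding theorem, replacing the outer idempotents $aa^{\|(b,c)},\,dd^{\|(b,c)}$ by the inner idempotents $a^{\|(b,c)}a,\,d^{\|(b,c)}d$, and reusing Lemma \ref{le:bc-inverses} in an essentially symmetric way. First I would collect the four identities obtained by post-multiplying parts $(i)$ and $(ii)$ of Lemma \ref{le:bc-inverses} by $a$ or $d$ on the right:
\begin{equation*}
a^{\|(b,c)}a = a^{\|(b,c)}dd^{\|(b,c)}a = d^{\|(b,c)}da^{\|(b,c)}a,\qquad d^{\|(b,c)}d = d^{\|(b,c)}aa^{\|(b,c)}d = a^{\|(b,c)}ad^{\|(b,c)}d.
\end{equation*}
These are the analogues of the equalities labelled \eqref{eq:idempotents} in the preceding theorem and will carry the entire argument for $(i)\Leftrightarrow(ii)\Leftrightarrow(iii)$.

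For $(i)\Rightarrow(ii)$, from $a^{\|(b,c)}a=d^{\|(b,c)}d$ and idempotency I would read off $d^{\|(b,c)}da^{\|(b,c)}a = (d^{\|(b,c)}d)^{2} = d^{\|(b,c)}d = (a^{\|(b,c)}a)^{2} = a^{\|(b,c)}ad^{\|(b,c)}d$. For $(ii)\Rightarrow(i)$, the two identities in the display above reduce $d^{\|(b,c)}da^{\|(b,c)}a$ to $a^{\|(b,c)}a$ and $a^{\|(b,c)}ad^{\|(b,c)}d$ to $d^{\|(b,c)}d$, yielding $(i)$ immediately. The equivalence $(i)\Leftrightarrow(iii)$ works the same way, using the other pair of identities: $a^{\|(b,c)}dd^{\|(b,c)}a = a^{\|(b,c)}a$ and $d^{\|(b,c)}aa^{\|(b,c)}d = d^{\|(b,c)}d$.

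For $(iii)\Leftrightarrow(iv)$, set $y = d^{\|(b,c)}a$ and test whether $y$ is the group inverse of $a^{\|(b,c)}d$. The commutation condition becomes precisely $d^{\|(b,c)}aa^{\|(b,c)}d = a^{\|(b,c)}dd^{\|(b,c)}a$, which is $(iii)$. The two absorption conditions are automatic: using Lemma \ref{le:bc-inverses}$(i)$, $d^{\|(b,c)}aa^{\|(b,c)}=d^{\|(b,c)}$ so
\begin{equation*}
y\,a^{\|(b,c)}d\,y = (d^{\|(b,c)}aa^{\|(b,c)})\,dd^{\|(b,c)}a = d^{\|(b,c)}dd^{\|(b,c)}a = d^{\|(b,c)}a = y,
\end{equation*}
and symmetrically, using Lemma \ref{le:bc-inverses}$(ii)$ in the form $a^{\|(b,c)}dd^{\|(b,c)} = a^{\|(b,c)}$, one obtains $a^{\|(b,c)}d\,y\,a^{\|(b,c)}d = a^{\|(b,c)}d$. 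Uniqueness of the group inverse then gives $(a^{\|(b,c)}d)^{\#} = d^{\|(b,c)}a$. Conversely, $(iv)$ forces $y\cdot a^{\|(b,c)}d = a^{\|(b,c)}d\cdot y$, which is $(iii)$.

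The implication $(iii)\Leftrightarrow(v)$ is proved by the same recipe with the roles of $a$ and $d$ swapped, taking $y=a^{\|(b,c)}d$ as the candidate group inverse of $d^{\|(b,c)}a$. I do not anticipate any real obstacle: the whole argument is driven by the two absorption identities of Lemma \ref{le:bc-inverses}, and the only point demanding mild care is bookkeeping — making sure that in each of the four verifications one applies the identity $d^{\|(b,c)}aa^{\|(b,c)}=d^{\|(b,c)}$ or $a^{\|(b,c)}dd^{\|(b,c)}=a^{\|(b,c)}$ on the correct side so that the redundant factor of $aa^{\|(b,c)}$ or $dd^{\|(b,c)}$ collapses.
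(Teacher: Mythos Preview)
Your proposal is correct and follows exactly the approach the paper intends: the paper does not write out a proof of this theorem but simply announces it as the analogue ``in terms of the other $(b,c)$-idempotent'' of the preceding result, and your argument is precisely that analogue, obtained by post-multiplying Lemma~\ref{le:bc-inverses}(i)--(ii) by $a$ and $d$ on the right instead of pre-multiplying on the left.
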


Next, we consider conditions under which the reverse order rule for the $(b,c)$-inverse of  the product $ad$, $(ad)^{\|(b,c)}=d^{\|(b,c)}a^{\|(b,c)}$ holds.
\begin{theorem} \label{th:ad-bcinverse}
Let $a,b,c,d\in R$ such that $a^{\|(b,c)}$ and $d^{\|(b,c)}$ exist.  Then the following statements are equivalent:
 \begin{enumerate}[(i)]
 \item $ad$ has a $(b,c)$-inverse of the form $(ad)^{\|(b,c)}=d^{\|(b,c)}a^{\|(b,c)}$.
 \item $d^{\|(b,c)}=d^{\|(b,c)}add^{\|(b,c)}a^{\|(b,c)}=d^{\|(b,c)}a^{\|(b,c)}add^{\|(b,c)}$.
 \item $a^{\|(b,c)}=a^{\|(b,c)}add^{\|(b,c)}a^{\|(b,c)}=d^{\|(b,c)}a^{\|(b,c)}ada^{\|(b,c)}$.
 \end{enumerate}
 \end{theorem}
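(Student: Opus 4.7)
Plan: Let $\alpha := a^{\|(b,c)}$ and $\delta := d^{\|(b,c)}$, and set $y := \delta\alpha$. I would prove $(i)\Leftrightarrow(ii)$ and $(i)\Leftrightarrow(iii)$, using Lemma \ref{lem2:bc-inverse} to restate $(i)$ as the conjunction $y(ad)y=y$, $yR=bR$, $Ry=Rc$. The observation that drives everything is that Lemma \ref{lem2:bc-inverse} applied to $\alpha$ and to $\delta$ yields $\alpha R=bR=\delta R$ and $R\alpha=Rc=R\delta$; in particular, $yR=\delta\alpha R\subseteq\delta R=bR$ and $Ry=R\delta\alpha\subseteq R\alpha=Rc$ are automatic, so the condition $yR=bR$ reduces to the single membership $\delta\in yR$ (or equivalently $\alpha\in yR$), and similarly on the left.

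For $(ii)\Rightarrow(i)$: the second identity $\delta=\delta\alpha ad\delta$ of $(ii)$ gives $y(ad)y=(\delta\alpha ad\delta)\alpha=\delta\alpha=y$, and also exhibits $\delta=(\delta\alpha)(ad\delta)\in\delta\alpha R=yR$, forcing $yR=bR$; symmetrically, the first identity $\delta=\delta ad\delta\alpha$ of $(ii)$ places $\delta\in R\delta\alpha=Ry$, forcing $Ry=Rc$. For $(i)\Rightarrow(ii)$: from $yR=bR=\delta R$ I write $\delta=ys=\delta\alpha s$ for some $s\in R$ and multiply $y(ad)y=y$ on the right by $s$; the tail $\delta\alpha s$ collapses to $\delta$, giving the second identity in $(ii)$. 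Symmetrically, from $Ry=R\delta$ I write $\delta=ty$ and multiply on the left by $t$ to obtain the first identity in $(ii)$.

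The proof of $(i)\Leftrightarrow(iii)$ follows the same template with $\alpha$ in place of $\delta$. For $(iii)\Rightarrow(i)$: the first identity $\alpha=\alpha ad\delta\alpha$ of $(iii)$ gives $y(ad)y=\delta(\alpha ad\delta\alpha)=\delta\alpha=y$ and displays $\alpha=(\alpha ad)y\in Ry$; the second identity $\alpha=\delta\alpha ad\alpha$ of $(iii)$ displays $\alpha=y(ad\alpha)\in yR$; the ideal equalities $yR=bR$ and $Ry=Rc$ then follow as above. For $(i)\Rightarrow(iii)$: $yR=\alpha R$ yields $\alpha=yt$, and multiplying $y(ad)y=y$ on the right by $t$ gives the second identity in $(iii)$; $Ry=R\alpha$ yields $\alpha=sy$, and multiplying on the left by $s$ gives the first identity in $(iii)$.

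There is no substantive obstacle beyond spotting the shortcut above: once one recognizes that $yR=bR$ and $Ry=Rc$ are controlled by the single memberships of $\delta$ (or $\alpha$) in $yR$ and $Ry$, each identity in $(ii)$ or $(iii)$ drops out of the outer-inverse equation $y(ad)y=y$ by multiplying by an appropriate witness drawn from those ideal equalities, with no further calculation needed.
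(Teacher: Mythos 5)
Your proof is correct, and it takes a genuinely different route from the paper's. The paper proves $(i)\Leftrightarrow(ii)$ by invoking Lemma \ref{le:bc-inverses} (applied with $ad$ in place of $d$) for the forward direction --- thus leaning on the representation formulas (\ref{eq:d-bcinverse}) and (\ref{eq2:d-bcinverse-2}) from Theorems \ref{th:d-bcinverse} and \ref{th2:d-bcinverse} --- and, for the converse, verifies the three conditions of Definition \ref{def:bc-inverse} for $y=d^{\|(b,c)}a^{\|(b,c)}$ by direct computation using $d^{\|(b,c)}db=b$ and $c=cdd^{\|(b,c)}$; it then gets $(iii)$ from $(ii)$ by another application of Lemma \ref{le:bc-inverses}. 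You instead work entirely inside the characterization of Lemma \ref{lem2:bc-inverse}, and your key structural observation --- that $\alpha R=bR=\delta R$ and $R\alpha=Rc=R\delta$ force $yR\subseteq bR$ and $Ry\subseteq Rc$ automatically, so each ideal equality collapses to a single membership statement that is read off directly from one of the identities in $(ii)$ or $(iii)$ --- lets you prove $(i)\Leftrightarrow(ii)$ and $(i)\Leftrightarrow(iii)$ symmetrically and with no dependence on Lemma \ref{le:bc-inverses} or the invertibility machinery behind it. Your argument is the more self-contained and economical one; the paper's approach has the mild advantage of exhibiting the identities in $(ii)$ as instances of the general mixed identities of Lemma \ref{le:bc-inverses} and of checking the original definition of the $(b,c)$-inverse rather than the equivalent criterion of Lemma \ref{lem2:bc-inverse}. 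The only point worth making explicit in your write-up is the one-line justification that, e.g., $\delta\in yR$ upgrades $yR\subseteq bR$ to $yR=bR$ because $yR$ is a right ideal containing $\delta$ and $\delta R=bR$; you state this correctly but tersely.
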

\begin{proof}
$(i)\Leftrightarrow (ii)$. We first assume that  $ad$ has a $(b,c)$-inverse given by $(ad)^{\|(b,c)}=d^{\|(b,c)}a^{\|(b,c)}$.
Then Lemma \ref{le:bc-inverses} is true for $(ad)^{\|(b,c)}$ in place of  $a^{\|(b,c)}$. It follows that
\begin{center}
$d^{\|(b,c)}=d^{\|(b,c)}ad(ad)^{\|(b,c)}=(ad)^{\|(b,c)}add^{\|(b,c)}$.
\end{center}
Substituting $(ad)^{\|(b,c)}=d^{\|(b,c)}a^{\|(b,c)}$ yields
\begin{center}
$d^{\|(b,c)}=d^{\|(b,c)}add^{\|(b,c)}a^{\|(b,c)}=d^{\|(b,c)}a^{\|(b,c)}add^{\|(b,c)}$.
\end{center}
Conversely, if the latter identities hold then $y=d^{\|(b,c)}a^{\|(b,c)}$ is the $(b,c)$-inverse of $ad$.
Indeed, since $d^{\|(b,c)}db=b$ and $c=cdd^{\|(b,c)}$, we have
\begin{eqnarray*}
yady&=&d^{\|(b,c)}a^{\|(b,c)}add^{\|(b,c)}a^{\|(b,c)}=d^{\|(b,c)}a^{\|(b,c)};\\
yadb&=&d^{\|(b,c)}a^{\|(b,c)}adb= d^{\|(b,c)}a^{\|(b,c)}add^{\|(b,c)}db=d^{\|(b,c)}db=b;\\
cady&=&cadd^{\|(b,c)}a^{\|(b,c)}=cdd^{\|(b,c)}add^{\|(b,c)}a^{\|(b,c)}=cdd^{\|(b,c)}=c.
\end{eqnarray*}
$(ii)\Rightarrow (iii)$.  By Lemma \ref{le:bc-inverses} we have
$a^{\|(b,c)}=a^{\|(b,c)}dd^{\|(b,c)}=d^{\|(b,c)}da^{\|(b,c)}$. By $(ii)$, one can see
\begin{center}
$a^{\|(b,c)}=a^{\|(b,c)}d(d^{\|(b,c)}add^{\|(b,c)}a^{\|(b,c)})= (d^{\|(b,c)}a^{\|(b,c)}add^{\|(b,c)})da^{\|(b,c)}$.
\end{center}
Hence, it is easy to get
$a^{\|(b,c)}=a^{\|(b,c)}add^{\|(b,c)}a^{\|(b,c)}= d^{\|(b,c)}a^{\|(b,c)}ada^{\|(b,c)}$.\\
$(iii)\Rightarrow (ii)$. The proof  is similar to $(ii)\Rightarrow (iii)$.
\end{proof}

\begin{theorem}
Let $a,b,c,d\in R$ such that $a^{\|(b,c)}$ and $d^{\|(b,c)}$ exist.   Then the following statements are equivalent:
\begin{enumerate}[(i)]
\item $a^{\|(b,c)}a = dd^{\|(b,c)}$.
\item $a^{\|(b,c)}dd^{\|(b,c)}a=dd^{\|(b,c)}aa^{\|(b,c)}$.
\item $d^{\|(b,c)}da^{\|(b,c)}a=da^{\|(b,c)}ad^{\|(b,c)}$.
\item $a^{\|(b,c)}= dd^{\|(b,c)}a^{\|(b,c)}$ and $d^{\|(b,c)}= d^{\|(b,c)}a^{\|(b,c)}a$.
\item $a^{\|(b,c)}ad^{\|(b,c)} = d^{\|(b,c)}a^{\|(b,c)}a$ and $a^{\|(b,c)}dd^{\|(b,c)}=dd^{\|(b,c)}a^{\|(b,c)}$.
\end{enumerate}
If any of the previous statements is valid, then
$(ad)^{\|(b,c)}=d^{\|(b,c)}a^{\|(b,c)}.$
\end{theorem}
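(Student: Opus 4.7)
The plan is to chain (i)--(v) through (i) as the hub, then extract the product formula as a corollary of Theorem \ref{th:ad-bcinverse}. The whole argument rests on the identities collected in Lemma \ref{le:bc-inverses}, which I will use repeatedly without re-deriving:
$a^{\|(b,c)} = a^{\|(b,c)}dd^{\|(b,c)} = d^{\|(b,c)}da^{\|(b,c)}$ and
$d^{\|(b,c)} = d^{\|(b,c)}aa^{\|(b,c)} = a^{\|(b,c)}ad^{\|(b,c)}$, together with the outer-inverse identities $a^{\|(b,c)}aa^{\|(b,c)} = a^{\|(b,c)}$ and $d^{\|(b,c)}dd^{\|(b,c)} = d^{\|(b,c)}$.

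For (i)$\Leftrightarrow$(ii) and (i)$\Leftrightarrow$(iii), I would simplify the two sides of (ii) and (iii) using Lemma \ref{le:bc-inverses}. Concretely, $a^{\|(b,c)}dd^{\|(b,c)}a = a^{\|(b,c)}a$ and $dd^{\|(b,c)}aa^{\|(b,c)} = dd^{\|(b,c)}$, so (ii) is literally (i); likewise $d^{\|(b,c)}da^{\|(b,c)}a = a^{\|(b,c)}a$ and $da^{\|(b,c)}ad^{\|(b,c)} = dd^{\|(b,c)}$, turning (iii) into (i). No obstruction here beyond routine substitution.

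For (i)$\Leftrightarrow$(iv) I would go forward by right-multiplying $a^{\|(b,c)}a = dd^{\|(b,c)}$ by $a^{\|(b,c)}$ and using $a^{\|(b,c)}aa^{\|(b,c)} = a^{\|(b,c)}$ to get $a^{\|(b,c)} = dd^{\|(b,c)}a^{\|(b,c)}$; symmetrically, left-multiplying by $d^{\|(b,c)}$ and using $d^{\|(b,c)}dd^{\|(b,c)} = d^{\|(b,c)}$ gives $d^{\|(b,c)} = d^{\|(b,c)}a^{\|(b,c)}a$. Backward: multiplying $a^{\|(b,c)} = dd^{\|(b,c)}a^{\|(b,c)}$ by $a$ on the right and $d^{\|(b,c)} = d^{\|(b,c)}a^{\|(b,c)}a$ by $d$ on the left both produce $a^{\|(b,c)}a = dd^{\|(b,c)}a^{\|(b,c)}a = dd^{\|(b,c)}$. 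For (i)$\Leftrightarrow$(v), the forward direction is again a direct substitution: $a^{\|(b,c)}ad^{\|(b,c)} = d^{\|(b,c)}$ from the Lemma, while (i) turns $d^{\|(b,c)}a^{\|(b,c)}a$ into $d^{\|(b,c)}dd^{\|(b,c)} = d^{\|(b,c)}$, giving the first equation; the second follows analogously. The backward direction combines the Lemma identity $a^{\|(b,c)}dd^{\|(b,c)} = a^{\|(b,c)}$ with the second equation of (v) to yield $a^{\|(b,c)} = dd^{\|(b,c)}a^{\|(b,c)}$, and then right-multiplication by $a$ recovers (i).

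For the reverse order rule, once any of (i)--(v) is assumed, I would invoke Theorem \ref{th:ad-bcinverse} by verifying its clause (ii), namely $d^{\|(b,c)} = d^{\|(b,c)}add^{\|(b,c)}a^{\|(b,c)} = d^{\|(b,c)}a^{\|(b,c)}add^{\|(b,c)}$. Using (iv) one has $dd^{\|(b,c)}a^{\|(b,c)} = a^{\|(b,c)}$, so $d^{\|(b,c)}add^{\|(b,c)}a^{\|(b,c)} = d^{\|(b,c)}aa^{\|(b,c)} = d^{\|(b,c)}$; using (i) one has $a^{\|(b,c)}add^{\|(b,c)} = dd^{\|(b,c)}dd^{\|(b,c)} = dd^{\|(b,c)}$, so $d^{\|(b,c)}a^{\|(b,c)}add^{\|(b,c)} = d^{\|(b,c)}dd^{\|(b,c)} = d^{\|(b,c)}$. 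Theorem \ref{th:ad-bcinverse} then delivers $(ad)^{\|(b,c)} = d^{\|(b,c)}a^{\|(b,c)}$. The only real obstacle is bookkeeping: the numerous products have to be collapsed in the right order, and one must be careful not to invoke the conclusion while proving the equivalences. Given the closed algebraic form of Lemma \ref{le:bc-inverses}, none of the individual steps require a genuinely new idea.
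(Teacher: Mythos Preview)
Your approach is essentially the paper's: reduce (ii) and (iii) to (i) via Lemma~\ref{le:bc-inverses}, handle (iv) and (v) by multiplying on the appropriate side, and derive the reverse order rule by verifying clause (ii) of Theorem~\ref{th:ad-bcinverse}. All of this is sound and matches the paper's argument almost line for line.

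There is one small slip in your sketch of (v)$\Rightarrow$(i). From the second equation of (v) together with $a^{\|(b,c)}dd^{\|(b,c)}=a^{\|(b,c)}$ you correctly obtain $a^{\|(b,c)}=dd^{\|(b,c)}a^{\|(b,c)}$, but right-multiplying by $a$ only gives $a^{\|(b,c)}a=dd^{\|(b,c)}a^{\|(b,c)}a$, which is not yet (i). You still need $dd^{\|(b,c)}a^{\|(b,c)}a=dd^{\|(b,c)}$, and that requires the \emph{first} equation of (v): combining it with the Lemma identity $a^{\|(b,c)}ad^{\|(b,c)}=d^{\|(b,c)}$ yields $d^{\|(b,c)}=d^{\|(b,c)}a^{\|(b,c)}a$, and left-multiplying by $d$ gives the missing piece. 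With that addition (which is exactly how the paper handles the analogous (iv)$\Rightarrow$(i)), your proof is complete.
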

\begin{proof}
$(i) \Leftrightarrow (ii) \Leftrightarrow (iii)$.
From Lemma \ref{le:bc-inverses} we obtain
\begin{equation} \label{eq2:idempotents}
\begin{split} a^{\|(b,c)}a=a^{\|(b,c)}dd^{\|(b,c)}a=d^{\|(b,c)}da^{\|(b,c)}a;\\
dd^{\|(b,c)}=dd^{\|(b,c)}aa^{\|(b,c)}=da^{\|(b,c)}ad^{\|(b,c)}.
\end{split}
\end{equation}
Hence, it gives that
\begin{eqnarray*}
a^{\|(b,c)}a=dd^{\|(b,c)}&\Leftrightarrow& a^{\|(b,c)}dd^{\|(b,c)}a=dd^{\|(b,c)}aa^{\|(b,c)}\\
&\Leftrightarrow& d^{\|(b,c)}da^{\|(b,c)}a=da^{\|(b,c)}ad^{\|(b,c)}.
\end{eqnarray*}

$(i)\Leftrightarrow (iv)$.  The necessary condition is immediate.
Next, we assume that   $a^{\|(b,c)}= dd^{\|(b,c)}a^{\|(b,c)}$ and  $d^{\|(b,c)}= d^{\|(b,c)}a^{\|(b,c)}a$. Then we have
 $a^{\|(b,c)}a= dd^{\|(b,c)}a^{\|(b,c)}a$ and $dd^{\|(b,c)}= dd^{\|(b,c)}a^{\|(b,c)}a$. So $a^{\|(b,c)}a =dd^{\|(b,c)}$, as desired.

$(v)\Leftrightarrow(i)$. The proof is similar to the above.\par

Finally, we will prove that $dd^{\|(b,c)}=a^{\|(b,c)}a$  implies that $(ad)^{\|(b,c)}=d^{\|(b,c)}a^{\|(b,c)}$.
Since $d^{\|(b,c)}=d^{\|(b,c)}a^{\|(b,c)}a$, we have $d^{\|(b,c)}=d^{\|(b,c)}a^{\|(b,c)}add^{\|(b,c)}$.
Moreover, since $d^{\|(b,c)}= d^{\|(b,c)}aa^{\|(b,c)}$ by Lemma \ref{le:bc-inverses}, using $dd^{\|(b,c)}=a^{\|(b,c)}a$, it follows that
\begin{center}
 $d^{\|(b,c)}= d^{\|(b,c)}aa^{\|(b,c)}=d^{\|(b,c)}aa^{\|(b,c)}aa^{\|(b,c)}=d^{\|(b,c)}add^{\|(b,c)}a^{\|(b,c)}$.
\end{center}
By Theorem \ref{th:ad-bcinverse} our  assertion is proved.
\end{proof}

\end{document}